\documentclass[12pt]{amsart}
\usepackage{graphicx}
\usepackage{amsthm}
\usepackage{amssymb}
\usepackage{amsmath,amsfonts,amstext,amssymb,amsbsy,amsopn,amsthm,eucal}
\usepackage{color}
\usepackage{pstricks}

\usepackage{fullpage}

\newtheorem{tm}{Theorem}[section]
\newtheorem{rem}[tm]{Remark}
\newtheorem{lem}[tm]{Lemma}

\newtheorem{ko}[tm]{Corollary}
\theoremstyle{definition}
\newtheorem{ex}[tm]{Example}

\newcommand{\lmat}{\left[\begin{array}}
\newcommand{\rmat}{\end{array}\right]}
\newcommand{\diag}{{\rm diag\,}}
\renewcommand{\hat}{\widehat}

\def\uinl{\left|\kern-1.5truept\left|\kern-1.5truept\left|}
\def\uinr{\right|\kern-1.5truept\right|\kern-1.5truept\right|}
\def\uinlr{\hbox{$|\kern-1.5truept|\kern-1.5truept|\cdot
                 |\kern-1.5truept|\kern-1.5truept|$}}

\def\wtd{\widetilde}
\def\what{\widehat}

\def\be{\begin{enumerate}}
\def\ee{\end{enumerate}}
\def\R{\mathbb{R}}
\def\C{\mathbb{C}}

\def\cF{\mathcal{F}}
\def\cG{\mathcal{G}}
\def\cal{\mathcal}

\date{}
%
%

%

\DeclareMathOperator{\Ran}{Ran} \DeclareMathOperator{\Ker}{Ker}

\DeclareMathOperator{\RelGap}{RelGap}

\begin{document}
\title{The Rotation of Eigenspaces of Perturbed Matrix Pairs}


\author[LG]{Luka Grubi\v{s}i\'{c}}
\author[NT]{Ninoslav Truhar}
\author[KV]{Kre\v{s}imir Veseli\'{c}}
\address{University of Zagreb, Depertment of Mathematics, Bijeni\v{c}ka 30, 10000 Zagreb, Croatia.}
\email{luka.grubisic@math.hr}
\address{Department of Mathematics,
  University J.J. Strossmayer, Trg Ljudevita Gaja 6, 31000 Osijek, Croatia.}
  \email{ntruhar@mathos.hr}
\address{
Fernuniversit\"at in Hagen, Lehrgebiet Mathematische Physik, 58084 Hagen, Germany.}
\email{kresimir.veselic@fernuni-hagen.de}
\keywords{
matrix pairs, rotation of eigenvectors}

\subjclass{15A42, 65F15, 47A55}
\begin{abstract}
We revisit the relative perturbation theory for invariant subspaces of
positive definite matrix pairs. As a prototype
model problem for our results we consider parameter dependent families of eigenvalue problems. We show
that new estimates are a natural way to obtain sharp --- as functions of the parameter indexing the family of
matrix pairs --- estimates for the rotation of spectral subspaces.
\end{abstract}
\maketitle

\section{Introduction and motivation}
This paper is concerned with the notion of the optimality of bounds on the rotation of spectral subspaces
of positive definite Hermitian matrix pairs under the influence of additive perturbations. Precisely, given positive definite
Hermitian matrix pairs $(H, M)$ and $(\widetilde{H}, \widetilde{M})=(H+\delta H, M+\delta M)$ and their spectral subspaces $\mathcal{E}$ and $\tilde{\mathcal{E}}$ of the
same dimensionality we provide estimates
\begin{equation}\label{eq:I1}
\|\sin\Theta_M(\mathcal{E},\tilde{\mathcal{E}})\|\leq\text{Gap}_1\frac{\eta_H}{\sqrt{1-\eta_H}}+
\text{Gap}_2\frac{\eta_M}{\sqrt{1-\eta_M}}
\end{equation}
where $\eta_A=\|A^{-1/2}(A-\tilde A)A^{-1/2}\|$ is the usual relative distance between positive definite Hermitian matrices
$A$ and $\tilde A$, $\text{Gap}_i$ measure the gaps in the spectrum and $\|\sin\Theta_M(\mathcal{E},\tilde{\mathcal{E}})\|$
measures the size of the rotation in the scalar product $(x,y)_M=x^*My$ dependent on the matrix $M$.
For more on $\sin\Theta$ theorems see \cite{Davis1970,Grubivsi'c2007,Li1999a,Li1999,Stewart1990}. In
comparison, we approach the problem of the changing scalar product by presenting our estimates in the $M$-scalar product,
whereas the standard approach yields estimates in the Euclidean scalar product.

Let us now consider the notion of the optimality of perturbation estimates in the context of parameter dependent
perturbation families.
In this setting we analyze rotations of eigenspaces of positive definite Hermitian matrix pairs $(H,M)$ under the influence
of a parameter dependent family of perturbations. The allowed families of perturbations $\delta H_\kappa$ and $\delta M_\kappa$ ---
where $\kappa$ is some indexing parameter --- are assumed to satisfy the restrictions
\begin{align}\label{assup:1}
\big|x^*\delta H_\kappa y\big|\leq \cF(\kappa)\sqrt{x^*Hx\;y^* Hy}&&\big|x^*\delta M_\kappa y\big|\leq \cG(\kappa)\sqrt{x^*Mx\;y^*My}\\
\lim_{\kappa\to\infty} \cF(\kappa)=0&&\lim_{\kappa\to\infty} \cG(\kappa)=0.
\end{align}
Here the matrix valued functions $\delta H_\kappa$ and $\delta M_\kappa$ are assumed to take value in the space of Hermitian matrices of appropriate size,
and by a convention $x^*$ denotes the transpose or Hermitian transpose of an object $x$ --- be it matrix or vector ---
as is given by the context. We also assume that  $\cF$ and $\cG$ are some real valued functions and we apply (\ref{eq:I1})
by setting $H_\kappa:=H+\delta H_\kappa$ and $M_\kappa:=M+\delta M_\kappa$ and noting the estimates $\eta_{H_\kappa}\leq\cF(\kappa)$ and $\eta_{M_\kappa}\leq\cG(\kappa)$
if we set $\widetilde{H}=H_\kappa$ and $\widetilde{M}=M_\kappa$.

It is our aim to argue that matrix dependent scalar product gives a natural
environment to obtain optimal convergence estimates as functions of the parameter $\kappa$. Further feature of our theory is that our estimates are invariant\footnote{The value of $\text{Gap}_i$ does not change under this transformation of the problem.} to the ``inversion of the problem'', so we can also obtain estimates in the $H$ based scalar product by switching the roles of $H$ and $M$. In this context the reader should also note that the identity (\ref{assup:1}), under the assumption that $\kappa$ is such that $\cF(\kappa)<1$ and $\cG(\kappa)<1$ yields the estimates (cf. equation (\ref{cf_eq}))
\begin{align}\label{assup:2}
\big|x^*(H_\kappa^{-1}-H^{-1}) y\big|&\leq \frac{\cF(\kappa)}{1-\cF(\kappa)}\sqrt{x^*H^{-1}x\;y^* H^{-1}y}\\\big|x^*(M_\kappa^{-1}-M^{-1}) y\big|&\leq \frac{\cG(\kappa)}{1-\cG(\kappa)}\sqrt{x^*M^{-1}x\;y^*M^{-1}y}.
\end{align}

Let us now give more insight into the applications which are covered by the assumptions (\ref{assup:1}). This structure is rich enough to
include discretization matrices approximating several singularly perturbed families of problems appearing in mathematical physics. Among other
applications, the penalty methods for Stokes and Maxwell equations from \cite{Warburton2006} can be analyzed in this context, too.
The parameter $\kappa$ is then called the penalty parameter, and it is of interest what happens to the eigenvalues
and eigenspaces as $\kappa\to\infty$. In \cite[Section 4]{Warburton2006} the authors have studied the
perturbation of eigenvalues by a very elegant Gerschgorin type argument and in this paper
we give an eigenspace counterpart of such a result.
For more details see \ref{App} and the explicitly solved academic model problems from Section \ref{Acc1}.

Also, the effect of numerical integration on the rotation of eigenspaces, when assembling finite element mass and stiffness matrices, is covered by (\ref{assup:1}). In this context $\kappa$ is the parameter describing the effect of increasing accuracy of the integration formula. This approach is also used for ``mass lumping'' which amounts to constructing a diagonal matrix $D=M+\delta M$, with $\delta M$ small in some sense. For further information and references see the paper \cite{Banerjee1990}, \ref{App} and the academic example from Section \ref{Acc2} where we rather
favorably compare our results with those that follow from the standard reference \cite{Stewart1990}.

We end this discussion by noting that similar energy norm estimates for eigenvectors have been obtained in \cite{Hetmaniuk2006a} in the context of
the analysis of Laczos method. Furthermore, the authors show how to efficiently compute the ingredients of the estimator in
the context of computationally competitive numerical linear algebra procedures. We extend some of those results by giving a subspace version of some of the estimates, e.g. see appropriate parts of \cite[Proposition 3.3 and 3.4]{Hetmaniuk2006a} and compare with our numerical results from Section \ref{num}. It is possible that
our subspace results could be of technical help when developing a similar analysis of the block Lanczos method.

We now turn to the main question of this paper. What is the real nature of the sharpness claim of a $\sin\Theta$ theorem?
Many of such theorems are obtained under essentially different spectral assumptions. Each is claimed to be sharp by constructing
an appropriate example where the bound is attained. The results cannot be readily compared, even though
one class of results can be seen to be following
from the other, since their optimality depends on the set of assumption which were necessary
to obtain the results. Quantitatively,
transforming one class of results into the other type of estimates changes the quantitative
performance of the results so considerably that a direct comparison
is no longer fair. In a sense, each result is ``sharp'' given the setting in which it has
been obtained so discussion is
more about which set of assumptions are more appropriate than the others.

We do not further address this fundamental questions. Instead, we opt to normalize the estimates by dividing the measure of the rotation which is being estimated
with the estimator and then compare various estimates on specially tailored model problems.
A first logical candidate --- in a single matrix case --- for a competing estimate would be a $\sin\Theta$ theorem from
\cite{Davis1970,Li1999,Grubivsi'c2007}. However,
it turns out that this estimate --- as the function of $\kappa$ --- is overly pessimistic.
Our aim is in particular
to derive sharp estimates for the rotation of eigenspaces for this class of
problems given by the parameter dependent family $H_\kappa=H+\delta H_\kappa$. The solution is to look
for the rotation of eigenspaces in the energy norm, that is $H$ based.
In our setting this boils down to the analysis of the matrix pair
$(H_\kappa^{-1}, H_\kappa)$. Let us also point
out that we will discuss sharpness, or lack of it, in the various $\sin\Theta$
results by comparing
the residual type estimates which can be obtained for matrix pairs
$$
(H_\kappa,I),\;(I,H_\kappa),\;(H_\kappa^{-1},H_\kappa),\;(H_\kappa^{-1},I),\;(I,H_\kappa^{-1}).
$$
We will conclude that any estimate of an eigenspace rotation under the assumptions (\ref{assup:1}) is actually meant to be
in a matrix dependent scalar product and that it will under-perform if used to measure rotations in the Euclidean scalar product.

\section{Notations, definitions and the general setting}
The optimal setting to consider all of the above eigenvector problems is an analysis of
the whole class of positive definite matrix pairs $(H, M)$,
where $H$ and $M$ are positive definite. More to the point,
we consider the following generalized eigenvector problem
\begin{eqnarray}\label{eigprobl1}
H x & = & \lambda M x ,
\end{eqnarray}
and the corresponding perturbed one
\begin{eqnarray}\label{eigprobl1t}
(H+\delta H) \wtd x & = & \wtd \lambda (M + \delta M) \wtd x \,,
\end{eqnarray}
where  $H, M$, $\wtd H \equiv H+\delta H$,
$ \wtd M \equiv M + \delta M  \in \C^{n \times n}$ are Hermitian positive definite.

\subsection{Spectral theorem and block operator matrix notation}Under these assumptions  matrix pairs $(H, M)$ can be simultaneously diagonalized, that is
there exists a non-singular matrix $X$ such that
\begin{align} \label{diagpar1}
X^*
H X = \Lambda, \quad X^* M X = I,
\end{align}
where
$
\Lambda = \diag(\lambda_1, \ldots, \lambda_n)
\, \quad \lambda_i \in \R $
for $i = 1, \ldots, n$ and we use $X^*$ to denote the Hermitian adjoint.

We will represent our perturbation problem by block operator matrices and will use the following
notation for the perturbation problems which will be needed in the analysis. Let us decompose  $X$ and $\wtd X $ as
\begin{eqnarray*} 
X = \begin{bmatrix}X_{1} & X_{2} \end{bmatrix} & & \wtd X = \begin{bmatrix}\wtd X_{1} &
\wtd X_{2} \end{bmatrix},
\end{eqnarray*}
where $X_{1}, \wtd X_{1} \in \C^{n \times k}$ and
$X_{2}, \wtd X_{2} \in \C^{n \times n-k}$.
The eigen-decomposition (\ref{diagpar1}) can now be written as
\begin{eqnarray} \label{diagvec1}
\begin{bmatrix}X^*_{1} \\ X^*_{2} \end{bmatrix} H \begin{bmatrix}X_{1} & X_{2} \end{bmatrix}
= \begin{bmatrix}\Lambda_{1} & 0  \\ 0 & \Lambda_{2} \end{bmatrix} , &&
\begin{bmatrix}X^*_{1} \\ X^*_{2} \end{bmatrix} M \begin{bmatrix}X_{1} & X_{2} \end{bmatrix}
= \begin{bmatrix}I_{k} & 0 \\ 0 & I_{n-k} \end{bmatrix} .
\end{eqnarray}
Similarly as above, for perturbed quantities one can write
\begin{eqnarray} \label{diagvec1t} \begin{bmatrix}\wtd
X^*_{1} \\ \wtd X^*_{2}  \end{bmatrix} \wtd H
\begin{bmatrix}\wtd X_{1} & \wtd X_{2} \end{bmatrix} =
\begin{bmatrix} \wtd
\Lambda_{1} & 0 \\ 0 & \wtd \Lambda_{2} \end{bmatrix} , &  &
\begin{bmatrix}\wtd X^*_{1} \\ \wtd X^*_{2} \end{bmatrix} \wtd M
\begin{bmatrix}\wtd X_{1} & \wtd X_{2}  \end{bmatrix}   =
\begin{bmatrix} I_{k} & 0 \\ 0 & I_{n-k}  \end{bmatrix}.\label{druga}
 \end{eqnarray}
and
\begin{eqnarray} \label{diagvec2t} \begin{bmatrix}\what
X^*_{1} \\ \what X^*_{2}  \end{bmatrix} \wtd H
\begin{bmatrix}\what X_{1} & \what X_{2} \end{bmatrix} =
\begin{bmatrix} \what
\Lambda_{1} & 0 \\ 0 & \what \Lambda_{2} \end{bmatrix} , &  &
\begin{bmatrix}\what X^*_{1} \\ \what X^*_{2} \end{bmatrix} M
\begin{bmatrix}\what X_{1} & \what X_{2}  \end{bmatrix}   =
\begin{bmatrix} I_{k} & 0 \\ 0 & I_{n-k}  \end{bmatrix},
 \end{eqnarray}
where $\what X_{1} \in \C^{n \times k}$ and
$\what X_{2} \in \C^{n \times n-k}$ and $\what X = \begin{bmatrix}\what X_{1} &
\what X_{2} \end{bmatrix}$.

\subsection{Measuring perturbations of positive definite matrices}
The size of the perturbations $\delta H$ and $\delta M$ will be measured in the relative sense. This
means that we assume that we have information on the singular values of the matrices
\begin{equation}\label{eq:1}
H^{-1/2}(H-\wtd H){\wtd H}^{-1/2}\;\text{and}\;M^{-1/2}(M-\wtd M){\wtd M}^{-1/2}
\end{equation}
or
\begin{equation}\label{eq:2}
H^{-1/2}(H-\wtd H){H}^{-1/2}\;\text{and}\;M^{-1/2}(M-\wtd M){M}^{-1/2}.
\end{equation}
Typically we only use the maximal singular value, that is the spectral norm estimate
of these relative perturbations. More to the point we will use the quantities defined in the lemma
below in most of our arguments. In this paper we use $\|\cdot\|_2$ to denote the spectral matrix norm, and
$\|\cdot\|$ to denote any unitary invariant matrix norm, when there is no danger of confusion.
\begin{lem}\label{lem1}
Let $H$ be a positive definite matrix and let $\Psi_H=\|H^{-1/2}(H-\wtd H){\wtd H}^{-1/2}\|$
and $\eta_H=\|H^{-1/2}(H-\wtd H){H}^{-1/2}\|_2$. Then for any $x, y\in\C^n$
\begin{align}
|x^*(H-\wtd H)x|&\leq\eta_H~ x^*Hx,\\
|x^*(H-\wtd H)y|&\leq\frac{\eta_H}{\sqrt{1-\eta_H}}~ \sqrt{x^*Hx~x^*{\wtd H}x},\\
\|H^{-1/2}(H-\wtd H){\wtd H}^{-1/2}\|&\leq\frac{1}{\sqrt{1-\eta_H}}
\|H^{-1/2}(H-\wtd H){H}^{-1/2}\|.\label{zadnja}
\end{align}
In particular, relation (\ref{zadnja}) reduces to $\Psi_H\leq\frac{\eta_H}{\sqrt{1-\eta_H}}$ in
the case $\|\cdot\|=\|\cdot\|_2$.
\end{lem}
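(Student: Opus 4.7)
The plan is to reduce everything to a single Hermitian matrix. Set $E := H^{-1/2}(H-\widetilde H)H^{-1/2}$, so that $E$ is Hermitian with $\|E\|_2 = \eta_H$, and the identity $\widetilde H = H^{1/2}(I-E)H^{1/2}$ holds tautologically. Positive definiteness of $\widetilde H$ is equivalent to $I - E > 0$, which is automatic under the implicit assumption $\eta_H < 1$ whenever the right-hand sides of the claimed inequalities make sense. This single decomposition is the backbone of the whole proof.

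Inequality $(i)$ is then immediate: with $u = H^{1/2}x$ one has $x^*(H-\widetilde H)x = u^*Eu$, whence $|x^*(H-\widetilde H)x|\le \|E\|_2\|u\|^2 = \eta_H\, x^*Hx$. For inequality $(ii)$, which I read as $|x^*(H-\widetilde H)y|\le \frac{\eta_H}{\sqrt{1-\eta_H}}\sqrt{x^*Hx\cdot y^*\widetilde H y}$ (the printed $x^*\widetilde H x$ is evidently a typo), I set $u = H^{1/2}x$, $v = H^{1/2}y$ and rewrite
$$x^*(H-\widetilde H)y = u^* E v = u^*\bigl(E(I-E)^{-1/2}\bigr)\bigl((I-E)^{1/2} v\bigr).$$
Cauchy--Schwarz together with $\|u\|^2 = x^*Hx$ and $\|(I-E)^{1/2}v\|^2 = v^*(I-E)v = y^*\widetilde H y$ reduces the task to bounding the operator norm of $E(I-E)^{-1/2}$. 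By Hermitian functional calculus this equals $\max\{|\mu|/\sqrt{1-\mu} : \mu\in\sigma(E)\}$, and since $\sigma(E)\subset[-\eta_H,\eta_H]\subset(-1,1)$ an elementary one-variable check shows the maximum is attained at $\mu = \eta_H$, giving $\|E(I-E)^{-1/2}\|_2\le \eta_H/\sqrt{1-\eta_H}$.

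For inequality (\ref{zadnja}) I use the factorisation
$$H^{-1/2}(H-\widetilde H)\widetilde H^{-1/2} \;=\; E\cdot(H^{1/2}\widetilde H^{-1/2})$$
together with the standard submultiplicative property $\|AB\|\le \|A\|\,\|B\|_2$ enjoyed by every unitarily invariant norm. The spectral-norm factor is controlled by
$$\|H^{1/2}\widetilde H^{-1/2}\|_2^{2} \;=\; \|H^{1/2}\widetilde H^{-1}H^{1/2}\|_2 \;=\; \|(I-E)^{-1}\|_2 \;\le\; \tfrac{1}{1-\eta_H},$$
which yields (\ref{zadnja}) directly. Specialising $\|\cdot\|$ to the spectral norm makes the right-hand side $\eta_H/\sqrt{1-\eta_H}$, which is the stated bound on $\Psi_H$.

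The only genuine subtlety, rather than a true obstacle, is remembering that $H-\widetilde H$ need not be positive semi-definite, so the spectrum of $E$ is distributed over $[-\eta_H,\eta_H]$ and not over $[0,\eta_H]$; one must therefore verify that the scalar function $\mu\mapsto|\mu|/\sqrt{1-\mu}$ really is maximised at the positive endpoint, which it is. Everything else is a one-line manipulation once $E$ and the factorisation $\widetilde H = H^{1/2}(I-E)H^{1/2}$ are in place.
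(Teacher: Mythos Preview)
Your proof is correct and complete. The paper itself does not give a detailed argument---it simply states that the proof is ``by direct computation'' and cites \cite{Grubivsi'c2007}---and your approach via the single Hermitian matrix $E=H^{-1/2}(H-\widetilde H)H^{-1/2}$ together with the factorisation $\widetilde H=H^{1/2}(I-E)H^{1/2}$ is precisely the direct computation one would carry out, including the correct handling of the sign of $E$ and the submultiplicativity $\|AB\|\le\|A\|\,\|B\|_2$ for unitarily invariant norms.
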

The
proof is by direct computation, see also \cite{Grubivsi'c2007}. Let us note that our theory is not
limited to the use of spectral norm only. We allow for the consideration of any unitary invariant norm of
the perturbations (\ref{eq:1}) and (\ref{eq:2}).
\begin{rem}
When there is a danger of confusion we will use the notation
\begin{equation}\label{Psi_Fro}
\Psi_H^{\|\cdot\|}=\|H^{-1/2}(H-\wtd H){\wtd H}^{-1/2}\|
\end{equation}
to denote the dependence of the perturbation measure on the unitary invariant norm.
\end{rem}
\begin{rem}\label{L2lumping}
Let us note that in an application of this theory in the setting of the mass lumping
finite element methods we consider the perturbations of the $M$ matrix. The constant $\eta_M$ for such a
perturbation typically depends on mesh parameters. Furthermore, let us note that if there exist
constants $\delta_1$, $\delta_0$, $0<\delta_0\leq\delta_1$ such that
$$
\delta_0~x^*  D x\leq x^* M x\leq\delta_1~x^* D x
$$
holds for some symmetric positive definite matrices $D$ and $M$, then
$
\wtd M=\frac{\delta_1+\delta_0}{2}D
$
has the property
$$
\frac{2\delta_0}{\delta_0+\delta_1}x^*  \wtd M x\leq x^* M x\leq\frac{2\delta_1}{\delta_0+\delta_1} x^* \wtd M x
$$
which can be written as
$$|x^*(M-\wtd M)x|\leq \frac{\delta_1-\delta_0}{\delta_1+\delta_0}~ x^*\wtd Mx~.
$$
\end{rem}

\subsection{Relations between subspaces in the changing scalar product}
Let us now define the basic tools which will be used to compare subspaces of $\C^n$. Let $\mathcal{X}$
and $\mathcal{Y}$ be some generic $m$-dimensional subspaces of $\C^n$. For any of such subspaces
there are bases\footnote{By saying the basis $Y$ we mean ``the basis
           given by the columns of $Y$''.} $X, Y\in\C^{n\times m}$ such that
           $\mathcal{X}=\Ran(X)$ and $\mathcal{Y}=\Ran(Y)$.
 Let us choose $X$ and $Y$ such that $X^*X=Y^*Y=I_{m}$ then $P_{\mathcal{X}}=XX^*$ and $P_{\mathcal{Y}}=YY^*$
 are orthogonal projections onto $\mathcal{X}$ and $\mathcal{Y}$. Typically, one compares the subspaces
 $\mathcal{X}$ and $\mathcal{Y}$ by analyzing the spectral properties of the product
 $S_{(\mathcal{X}, \mathcal{Y})}=(I-P_{\mathcal{X}})P_{\mathcal{Y}}$.
The $m$-singular values of $S_{\mathcal{X}, \mathcal{Y}}\Big|_\mathcal{X}$---the restriction of
$S_{\mathcal{X}, \mathcal{Y}}$ on $\mathcal{X}$---are called the sines of the angle
between the subspaces $\mathcal{X}$ and $\mathcal{Y}$. In the matrix notation they
are exactly the $m$-singular values of the matrix
$$
S_{\mathcal{X}, \mathcal{Y}}=(I-XX^*)Y.
$$
This is the measure of the size of the rotation\footnote{Such rotation exists if all of the sines of the angle between $\mathcal{X}$ and $\mathcal{Y}$ are strictly smaller than one.} in $\C^n$ which would
move the subspace $\mathcal{X}$ onto $\mathcal{Y}$.

In this note we analyze the angles between the subspaces $\mathcal{X}$ and $\mathcal{Y}$ in the scalar
product $(x, y)_M=x^*My$, $x, y\in\C^n$ which is defined by the positive definite matrix $M$. To this end let $X^*MX=Y^*MY=I_{m}$, which is to say let $X$ and $Y$ be $M$-unitary. Then the sines of the angle between
$\mathcal{X}$ and $\mathcal{Y}$ in the $M$-scalar product are the $m$-singular values of the matrix product
$$
S^M_{\mathcal{X}, \mathcal{Y}}=M^{1/2}(I-XX^*M)Y
$$
For more on angles between the subspaces of $\C^n$ see \cite{Davis1970,Knyazev2002}.

Let us not that since both \(M\) and \(H\) are subject
to perturbation particular care is needed because the underlying space
geometry changes with \(M\). We shall therefore simplify the subsequent discussion of the $M$-product dependent
subspace angles.

In order to be definite we shall concentrate---and give explicit formulae for the angles---only on the relationship between the subspaces of interest for our analysis.
That is we consider the relationship between the subspaces $\Ran(X_{1})$, $\Ran({\wtd X}_{1})$ and
$\Ran({\what X}_{1})$.

The columns of $X_1$ and ${\what X_1}$ are $M$-orthogonal, then we use
the following characterization of the sines of the canonical angles
between the $M$ orthogonal subspaces ${\cal X}_{1} = \Ran(X_{1})$ and
${\what {\cal X}}_{1} = \Ran({\what X}_{1})$
induced by weighted $M$-inner product:
\begin{align}\label{def:SinTheta-M}
\sin{\Theta_M({\cal X}_{1}, {\what {\cal X}}_{1} )}   =
 {\what X}^*_2 M X_1 \,.
 \end{align}
Let us now consider the problem of the changing scalar product.
Since $\wtd M = M + \delta M$,
it follows from (\ref{druga}) that
\begin{align*}
\wtd X^* M \wtd X = I - \wtd X^* \delta M \wtd X\,.
\end{align*}
Assume that $I - \wtd X^* \delta M \wtd X$ is positive definite. Then
 $\what X$ and $\wtd X Y^{-*} $ are $M$-orthogonal, where $Y$ is Cholesky factor
 such that $Y Y^* =I - \wtd X^* \delta M \wtd X $.
%
%
%
We can now use a similar characterization of the sines of canonical angles as in (\ref{def:SinTheta-M}).
We show that the  sines of the canonical angles
between the eigenspaces $\what{\cal X}_{1} = \Ran(\what X_{1})$ and
$\wtd{\cal X}_{1} = \Ran(\wtd X_{1})$
induced by weighted $M$-inner product are given by:
\begin{align}\label{def:SinTheta-M2}
\sin{\Theta_M(\what {\cal X}_{1}, \wtd {\cal X}_{1} )}   =
\what X^*_2 M \wtd X_1 Y_{11}^{-*}, \quad {{\mbox{ where } }}  \quad
Y = \begin{bmatrix}Y_{11} & Y_{12} \\ 0 & Y_{22} \end{bmatrix} \,.
 \end{align}

Finally, let $P_{\cal X} = X X^*$ be orthogonal projector
onto $m$-dimensional subspace $\mathcal{X} = \Ran(X)$, where $X$ satisfies
$X^*X = I_m$. Using the  \cite[Theorem II 4.10.]{Stewart1990},
one can write
\begin{align} \label{neq:NormSinTheta_pom}
\left\|\sin{\Theta({\cal X}, {\cal Y})} \right\|  =
\left\| P_{\cal X} - P_{\cal Y} \right\| =
\left\| (I - P_{\cal X}) P_{\cal Y} \right\| = \left\| (I - P_{\cal Y}) P_{\cal X} \right\|\,,
 \end{align}
for any unitary invariant norm $\|\cdot\|$.
Further, note that the columns of the matrices $X_1^M = M^{1/2} X_1$, $\what X_1^M = M^{1/2} \what X_1$
and $\wtd X_1^M = M^{1/2} \wtd X_1 Y_{11}^{-*}$ are unitary, thus using (\ref{neq:NormSinTheta_pom}),
one can write
\begin{align}\label{neq:NormSinTheta_pomB}
\left\|\sin{\Theta_M( {\cal X}_{1}, \what {\cal X}_{1} )} \right\|  =
\left\| P_{{\cal X}_{1}} - P_{\what {\cal X}_{1}} \right\| =
\left\| \what X_2^* M X_1 \right\|\,.
 \end{align}
where $P_{{\cal X}_{1}} = X_1^M (X_1^M)^*$ and where $P_{\what {\cal X}_{1}} = \what X_1^M (\what X_1^M)^*$,
and similarly
\begin{align*} 
\big\|\sin{\Theta_M( \what {\cal X}_{1}, \wtd {\cal X}_{1} )} \big\|  =
\big\| P_{\what {\cal X}_{1}} - P_{\wtd {\cal X}_{1}}  \big\| =
\big\| \what X_2^* M \wtd X_1 Y_{11}^{-*} \big\|\,,
 \end{align*}
where $P_{\what {\cal X}_{1}} = \what X_1^M (\what X_1^M)^*$ and
$P_{\wtd {\cal X}_{1}} = \wtd X_1^M (\wtd X_1^M)^*$.

The results above imply the upper bound for the sines of the canonical angles
between the eigenspaces  ${\cal X}_{1} = {\cal
R}(X_{1})$ and $\wtd {\cal X}_{1} = \Ran(\wtd X_{1})$, can be obtained in any unitary invariant norm $\|\cdot\|$ ,
using the simple triangle inequality. We have
\begin{align} \label{neq:NormSinTheta}
 \big\|\sin{\Theta_M({\cal X}_{1}, \wtd {\cal X}_{1} )} \big\|  \leq
\big\| \sin{\Theta_M( {\cal X}_{1}, \what {\cal X}_{1} )} \big\| +
\big\| \sin{\Theta_M(\what {\cal X}_{1}, \wtd {\cal X}_{1} )} \big\|\,,
 \end{align}
that is $ \big\|\sin{\Theta_M({\cal X}_{1}, \wtd {\cal X}_{1} )} \big\|$
can be estimated as the sum of the upper bounds for the norms of
the sines matrices from (\ref{def:SinTheta-M}) and (\ref{def:SinTheta-M2}).

\section{The main result}

Our aim is to derive a bound for the sines of the canonical angles between eigenspaces
${\cal X}_{1} = \Ran(X_{1})$ and $\wtd {\cal X}_{1} = \Ran(\wtd X_{1})$ from (\ref{diagvec1}) and (\ref{diagvec1t}).

This will be done with the two steps procedure as suggested by the form of the inequality (\ref{neq:NormSinTheta}).
This approach is in the line with the pioneering analysis of the relative sensitivity of the eigenvalues of a positive-definite matrix pair from
\cite{Barlow1990}.

The road-map for the prof is outlined in the following list. For each preparatory step we will prove a theorem to
justify the procedure and at the end we will combine the conclusion in the main theorem. The
preparatory steps can be classified as follows:
\begin{enumerate}
\item \(H\) perturbed, \(M\) unchanged
\begin{align}\label{eq:PertI}
X^* H X = \Lambda, \quad X^* M X = I, \qquad  \what X^* \wtd H \what X = \what \Lambda,
\quad \what X^* M \what X =I,
\end{align}
where
$\Lambda = \diag(\lambda_1, \ldots, \lambda_n)\,,$
$ \what \Lambda = \diag(\what \lambda_1, \ldots, \what \lambda_n)\,,$
$  \lambda_i, \what \lambda_i \in \R$, for $i = 1, \ldots, n$.
\item \(M\) perturbed, \(H\) unchanged
\begin{align} \label{PertStep2}
X^* \wtd  H  X = \what  \Lambda, \quad
X^* M \what  X = I, \qquad
\wtd X^* \wtd H \wtd X = \wtd \Lambda, \quad \wtd X^* \wtd M \wtd X= I,
\end{align}
where
$ \Lambda = \diag( \lambda_1, \ldots, \lambda_n)\,,$ and
$\wtd \Lambda = \diag(\wtd \lambda_1, \ldots, \wtd \lambda_n)\,,$
and $\lambda_i,  \wtd \lambda_i \in \R$, for $i = 1, \ldots, n$.
\end{enumerate}

The main tools in our analysis will be sharp estimates for the solution of the structured Sylvester
equations from \cite[Lemma 2.4]{Li1999} and \cite[Lemma 2.3]{Li1999}. That is, we consider the
structured Sylvester equations\footnote{The solution of (\ref{eq:S1}) is presented in \cite[Lemma 2.4]{Li1999}. This equation has also been analyzed in infinite dimensional setting in \cite{Grubivsi'c2007}. The equation (\ref{eq:S2})
has been analyzed in \cite[Lemma 2.3]{Li1999}, see also \cite{Li1999a}.}
\begin{align}\label{eq:S1}
AX-XB&=A^{1/2}CB^{1/2}\\
AX-XB&=CB.\label{eq:S2}
\end{align}

\subsection{The first step}
Now we will state our first theorem. We will use the notation and the conclusions of
Lemma \ref{lem1} without further comments.

\begin{tm} \label{gptmsubsp2}
Let $(H, M)$ be a Hermitian pair defined by (\ref{eigprobl1}) and
let $(\wtd H, M)$ be perturbed pair defined by
\begin{eqnarray*} 
(H+\delta H) \what x & = & \what \lambda M \what x \,.
\end{eqnarray*}
Let $X = \begin{bmatrix}X_{1} & X_{2} \end{bmatrix}$ and
$\what X = \begin{bmatrix}\what X_{1} & \what X_{2} \end{bmatrix}$, be
non-singular matrices  which simultaneously diagonalize the pairs
$(H, M)$ and $(\wtd H,  M)$, as in (\ref{eq:PertI}). By setting $\Psi_H=\|H^{-1/2}(H-\wtd H){\wtd H}^{-1/2}\|$ we have
\begin{align}\label{ocjsinpar2}
\|\sin{\Theta_M({\cal X}_{1}, \what {\cal X}_{1} )} \| & \leq
\frac{\Psi_H}{\RelGap} \,,
\end{align}
where
\begin{eqnarray} \label{relgap21}
\RelGap = \min_{\stackrel{\lambda_i \in \Lambda_2 }{ \what \lambda_j \in \what\Lambda_1}}
\frac{|\lambda_i - \what \lambda_j|}{\sqrt{|\lambda_i| |\what \lambda_j|}}
& &
\Lambda_2 = \diag(\lambda_{k+1}, \ldots, \lambda_n ), \quad \what\Lambda_1 =
\diag(\what \lambda_1, \ldots, \what \lambda_k )\,.
\end{eqnarray}

\end{tm}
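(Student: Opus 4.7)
The plan is to recognize that the $M$-sine matrix $\sin\Theta_M(\mathcal{X}_1,\widehat{\mathcal{X}}_1)$, as described in (\ref{def:SinTheta-M}), has the same singular values as the cross block $X_2^*M\widehat{X}_1$ (the sines being symmetric in the two subspaces), and then to show that this cross block satisfies a structured Sylvester equation of the form (\ref{eq:S1}) whose right-hand side has norm controlled by $\Psi_H$. The estimate (\ref{ocjsinpar2}) then follows at once from \cite[Lemma 2.4]{Li1999}.

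Concretely, I would first read off the eigenrelations from (\ref{eq:PertI}): $HX_1=MX_1\Lambda_1$, $HX_2=MX_2\Lambda_2$, $\widetilde{H}\widehat{X}_1=M\widehat{X}_1\widehat{\Lambda}_1$ and $\widehat{X}_j^*M\widehat{X}_j=I$, $X_j^*MX_j=I$. Taking $M$-adjoints one obtains $X_2^*H=\Lambda_2 X_2^*M$, so that multiplying $\widetilde{H}\widehat{X}_1-H\widehat{X}_1$ from the left by $X_2^*$ yields
\begin{equation*}
\Lambda_2 Z - Z\widehat{\Lambda}_1 = X_2^*(H-\widetilde{H})\widehat{X}_1, \qquad Z:=X_2^*M\widehat{X}_1.
\end{equation*}
This is the Sylvester equation that encodes the rotation; the left-hand side isolates the spectral gap between $\Lambda_2$ and $\widehat{\Lambda}_1$, and the right-hand side carries the whole perturbation.

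Next I would massage the right-hand side to fit the scaled format $A^{1/2}CB^{1/2}$ required by \cite[Lemma 2.4]{Li1999}. Using $X_2^*H^{1/2}=\Lambda_2^{1/2}U_2^*$ with $U_2=H^{1/2}X_2\Lambda_2^{-1/2}$ having orthonormal columns, and likewise $\widetilde{H}^{1/2}\widehat{X}_1=V_1\widehat{\Lambda}_1^{1/2}$ with $V_1$ having orthonormal columns, I rewrite
\begin{equation*}
X_2^*(H-\widetilde{H})\widehat{X}_1 = \Lambda_2^{1/2}\,\bigl[U_2^*H^{-1/2}(H-\widetilde{H})\widetilde{H}^{-1/2}V_1\bigr]\,\widehat{\Lambda}_1^{1/2}.
\end{equation*}
The bracketed matrix is a compression by two matrices with orthonormal columns, so by unitary invariance of $\|\cdot\|$ its norm is at most $\Psi_H=\|H^{-1/2}(H-\widetilde{H})\widetilde{H}^{-1/2}\|$. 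Applying \cite[Lemma 2.4]{Li1999} to the equation $\Lambda_2 Z-Z\widehat{\Lambda}_1=\Lambda_2^{1/2}C\widehat{\Lambda}_1^{1/2}$ with the positive diagonal matrices $\Lambda_2$ and $\widehat{\Lambda}_1$ gives $\|Z\|\le\|C\|/\RelGap$ with $\RelGap$ as in (\ref{relgap21}), and since $\|\sin\Theta_M(\mathcal{X}_1,\widehat{\mathcal{X}}_1)\|=\|Z\|$ by the symmetry remark above (or equivalently by (\ref{neq:NormSinTheta_pomB}) applied to the reversed pair), the estimate (\ref{ocjsinpar2}) follows.

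The only step that needs care is the factoring of the right-hand side in the correct square-root form so that \cite[Lemma 2.4]{Li1999} applies verbatim and yields precisely the relative gap $|\lambda_i-\widehat{\lambda}_j|/\sqrt{|\lambda_i||\widehat{\lambda}_j|}$ rather than an absolute gap. The swap $(\Lambda_2,\widehat{\Lambda}_1)$ versus $(\widehat{\Lambda}_2,\Lambda_1)$ is the reason to work with $X_2^*M\widehat{X}_1$ rather than $\widehat{X}_2^*MX_1$; both cross blocks carry the canonical $M$-angles but only one of them is factored by the eigenvalue blocks appearing in $\RelGap$. Everything else is a one-line verification using the eigenequations and Lemma \ref{lem1}.
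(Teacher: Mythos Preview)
Your proof is correct and follows essentially the same route as the paper: derive the Sylvester equation $\Lambda_2 Z - Z\widehat{\Lambda}_1 = X_2^*(H-\widetilde H)\widehat{X}_1$ for $Z=X_2^*M\widehat{X}_1$, factor the right-hand side as $\Lambda_2^{1/2}\bigl[U_2^*H^{-1/2}(H-\widetilde H)\widetilde H^{-1/2}\widehat U_1\bigr]\widehat{\Lambda}_1^{1/2}$ using the polar-type identities $H^{1/2}X_2=U_2\Lambda_2^{1/2}$ and $\widetilde H^{1/2}\widehat X_1=\widehat U_1\widehat\Lambda_1^{1/2}$, and then invoke \cite[Lemma 2.4]{Li1999}. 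Your closing remark about why one works with $X_2^*M\widehat X_1$ rather than $\widehat X_2^*MX_1$ is a useful clarification that the paper leaves implicit.
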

\begin{proof}
Since, according to (\ref{def:SinTheta-M}),
$$\sin{\Theta_M({\cal X}_{1}, \what {\cal X}_{1} )}   =
 X^*_2 M \what X_1\,,$$
we  have to bound $\| X^*_2 M \what X_{1} \|$.
By the definition we have
$X^* H X = \Lambda$, and so one can write
\begin{eqnarray}\label{itaproj}
H^{1/2} X = U \Lambda^{1/2}  \,, 
\end{eqnarray}
where $U=\begin{bmatrix}U_1 &U_2\end{bmatrix}=H^{1/2}X\Lambda^{-1/2}$ is unitary and has the block structure
conforming to the structure of $X$. A similar identity also holds for perturbed quantities.
On the other hand, for perturbed quantities it also holds 
$$ (H + \delta H) \what X_1 = M
\what X_1 \what \Lambda_1, $$ where $\what \Lambda_1 = \diag(\what \lambda_1, \ldots, \what \lambda_r)$,
and similarly for unperturbed quantities. We multiply the
above equality by $X_2^*$ from the left, and get
 \begin{align*} 
X_2^* H  \what X_1 -  X_2^* M  \what X_1 \what \Lambda_1 =  - X_2^* \delta H  \what X_1~.
\end{align*}
Using the fact that $H X_2 = M X_2 \Lambda_2 $, this identity can be transformed into
 \begin{align} \label{eq:SylvPertI}
\Lambda_2 X_2^* M  \what X_1 -  X_2^* M  \what X_1 \what \Lambda_1 =  - X_2^* \delta H  \what X_1 \,.
\end{align}
We will proceed by  rearranging the right-hand side of (\ref{eq:SylvPertI}). For that purpose note that one can rewrite the right-hand side of (\ref{eq:SylvPertI}) as
 \begin{align} \label{eq:SylvPertI-2}
 X_2^* \delta H  \what X_1  =  X_2^* H^{1/2} H^{-1/2} \delta H  {\wtd H}^{-1/2}{\wtd H}^{1/2} \what X_1\,,
\end{align}
which together with (\ref{itaproj}) gives
 \begin{align*} 
 X_2^* \delta H  \what X_1  = \Lambda_2^{1/2} U_2^* H^{-1/2} \delta H   {\wtd H}^{-1/2}  \what U_1 \what \Lambda_1^{1/2}\,.
\end{align*}

The above equality and (\ref{eq:SylvPertI}) give
 \begin{align} \label{eq:SylvPertII}
\Lambda_2 X_2^* M  \what X_1 -  X_2^* M  \what X_1 \what \Lambda_1 =  -\Lambda_2^{1/2} U_2^* H^{-1/2} \delta H   {\wtd H}^{-1/2}  \what U_1 \what \Lambda_1^{1/2}\,.
\end{align}
This identity can be recognized as the structured Sylvester equation from (\ref{eq:S1}). This equation
is even meaningful when $H$ and $M$ are unbounded operators. In this setting it is called the weak
Sylvester equation and it has been analyzed in \cite{Grubivsi'c2007}.

Applying  \cite[Lemma 2.4]{Li1999} to obtain the bounds on the solution of the structured Sylvester
  equation (see also \cite{Li1999a}), on (\ref{eq:SylvPertII}) one gets, see (\ref{Psi_Fro}):
\begin{align}
\| X_2^* M  \what X_1 \| \leq \frac{\Psi_H^{\|\cdot\|}}{\RelGap }\,,
\qquad {\mbox{ where }} \qquad
\RelGap  = \min_{\lambda_i \in  \Lambda_2 \,, \what \lambda_j \in \what \Lambda_1} \frac{\displaystyle{| \lambda_i -  \what \lambda_j|}}{\displaystyle{\sqrt{\lambda_i \, \what \lambda_j}}}\,,
\label{eq:Bound1}
\end{align}
for any unitary invariant norm $\|\cdot\|$. \qquad   \end{proof}

\subsection{The second step---the change in scalar product}

Here we will derive the upper bound for the sines of the canonical angles between the eigenspaces $\what{\cal X}_{1} = \Ran(\what X_{1})$ and
$\wtd{\cal X}_{1} = \Ran(\wtd X_{1})$
induced by weighted $M$-inner product, defined by
\begin{align}\label{def:SinTheta-M2b}
\sin{\Theta_M(\what {\cal X}_{1}, \wtd {\cal X}_{1} )}   =
\what X^*_2 M \wtd X_1 Y_{11}^{-*}\,,
 \end{align}
where  \begin{align}\label{def:CholFact}
 \begin{bmatrix}Y_{11} & 0 \\ Y_{21} & Y_{22} \end{bmatrix}
  \begin{bmatrix}Y_{11}^* & Y_{21}^* \\ 0 & Y_{22}^* \end{bmatrix}
  = I - \wtd X^* \delta M \wtd X \,.
 \end{align}

\begin{rem}  Note that one of  possibilities to chose $Y_{11}$ in (\ref{def:CholFact})
can be obtained by Block Cholesky elimination applied on the right-hand side in (\ref{def:CholFact}). This choice yields the block
$Y_{11}= \sqrt{I -  \wtd X_1^* \delta M  \wtd X_1}$.
\end{rem}

We now consider the problem of the perturbation of the matrix pair
$(\wtd  H, M)$ to $(\wtd  H, \wtd M)$. The following theorem contains the upper bound for the $\|\what X^*_2 M \wtd X_1\|$, where
$\| \cdot\|$ stands for any unitary invariant  norm.

\begin{figure}[ht]
\begin{pspicture}(12,2)
\psline(0.5,1)(12,1)
\psline[linewidth=3pt](8,1)(12,1)
\psline[linewidth=3pt](6,1)(7,1)
\rput(6.5,1.5){${\wtd \Lambda}_1$}
\rput(9,1.5){${\what \Lambda}_2$}
\rput(7,.5){$\alpha$}
\rput(8.2,.5){$\alpha+\delta$}
\rput(5,.5){$0$}
\psdot*[dotscale=1](5,1)
\end{pspicture}
\caption{Spectral configuration for Theorem \ref{gptmsubspstep2}.}\label{fig1}
\end{figure}
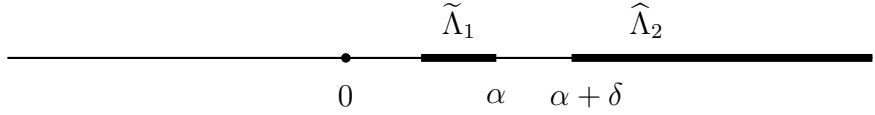

\begin{tm} \label{gptmsubspstep2}
Let $(\wtd H, M)$ be a Hermitian pair  and
let $(\wtd H, \wtd M)$ be perturbed pair defined by
\begin{eqnarray*} 
(H+\delta H) \wtd y & = & \wtd \lambda \wtd M \wtd y \,.
\end{eqnarray*}
Let $\what X = \begin{bmatrix}\what X_{1} & \what X_{2} \end{bmatrix}$ and
$\wtd X = \begin{bmatrix} {\wtd X}_{1} & {\wtd X}_{2} \end{bmatrix}$, be
non-singular matrices  which simultaneously diagonalize the pairs
$(\wtd H, M)$ and $(  \wtd H  , \wtd M)$, as in (\ref{PertStep2}). If
\begin{align} \label{condonSpectA}
& \|\what \Lambda_2\|  \leq \alpha   &{\mbox {\qquad and \qquad}}
& \|\wtd \Lambda_1^{-1}\|^{-1} \geq \alpha + \delta  &  {\mbox {\qquad or \qquad}} \\
 & \|\what \Lambda_2^{-1}\|^{-1}  \geq \alpha + \delta &  {\mbox {\qquad and \qquad}}
& \|\wtd \Lambda_1\| \leq \alpha
\label{condonSpectB}
\end{align}
where $
\what \Lambda_2 = \diag(\what \lambda_{k+1}, \ldots, \what \lambda_n )$,
$ \wtd\Lambda_1 = \diag(\wtd \lambda_1, \ldots, \wtd \lambda_k )$, see Figure \ref{fig1},
then
\begin{align}\label{ocjsinpar2b}
\left\|\what X^*_2 M \wtd X_1 \right\| & \leq
\frac{\Psi_M}{\RelGap_p} \,.
\end{align}
Here we have used $\Psi_M=\|M^{-1/2}(M-\wtd M){\wtd M}^{-1/2}\|$ and for all $1\leq p \leq \infty$ and we have
\begin{eqnarray} \label{relgap21b}
\frac{\delta}{ \alpha + \delta} \geq
\min_{\stackrel{\what \lambda_i \in \what \Lambda_2 }{ \wtd \lambda_j \in \wtd\Lambda_1}}
\frac{\hspace*{-0.25cm}|\what \lambda_i - \wtd \lambda_j|}{\phantom{a}\left( \what \lambda_i^p + \wtd \lambda_j^p \right)^{1/p}}=:\RelGap_p~.
\end{eqnarray}
\end{tm}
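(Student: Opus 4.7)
The approach will mirror the proof of Theorem~\ref{gptmsubsp2}, replacing the perturbation $(H,M)\to(\wtd H,M)$ by $(\wtd H, M)\to(\wtd H,\wtd M)$ and invoking the structured Sylvester equation (\ref{eq:S2}) in place of (\ref{eq:S1}), since here only the metric matrix is perturbed. The goal, after (\ref{def:SinTheta-M2b}), is to bound the unweighted quantity $\|\what X_2^* M \wtd X_1\|$; the Cholesky factor $Y_{11}^{-*}$ does not enter (\ref{ocjsinpar2b}) itself and will be reincorporated only when the two sides of (\ref{neq:NormSinTheta}) are combined in the main theorem.

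First, I would derive the controlling Sylvester equation for $Z := \what X_2^* M \wtd X_1$. From (\ref{PertStep2}) one reads off the two identities $\what X_2^*\wtd H = \what\Lambda_2\,\what X_2^* M$ and $\wtd H\,\wtd X_1 = \wtd M\,\wtd X_1\,\wtd\Lambda_1$. Multiplying the second on the left by $\what X_2^*$ and substituting from the first produces
\begin{equation*}
\what\Lambda_2\,\what X_2^* M \wtd X_1 \;-\; \what X_2^* M \wtd X_1\,\wtd\Lambda_1 \;=\; \what X_2^* \delta M\,\wtd X_1\,\wtd\Lambda_1,
\end{equation*}
which is precisely of the form $AZ-ZB=CB$ from (\ref{eq:S2}) with $A=\what\Lambda_2$, $B=\wtd\Lambda_1$ and $C=\what X_2^*\delta M\,\wtd X_1$. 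Next, I would expose the relative perturbation in $C$. Because $\what X^* M \what X = I$ and $\wtd X^* \wtd M \wtd X = I$, both $M^{1/2}\what X$ and $\wtd M^{1/2}\wtd X$ are unitary, so the blocks $M^{1/2}\what X_2$ and $\wtd M^{1/2}\wtd X_1$ have orthonormal columns. Factoring
\begin{equation*}
\what X_2^* \delta M \wtd X_1 \;=\; \bigl(M^{1/2}\what X_2\bigr)^*\,\bigl[M^{-1/2}\delta M\,\wtd M^{-1/2}\bigr]\,\bigl(\wtd M^{1/2}\wtd X_1\bigr),
\end{equation*}
and using the unitary invariance of $\|\cdot\|$, one obtains $\|C\|\leq\Psi_M$. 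Applying \cite[Lemma 2.3]{Li1999} to the Sylvester equation then gives $\|Z\|\leq\|C\|/\RelGap_p\leq\Psi_M/\RelGap_p$, valid for every $1\leq p\leq\infty$, which is (\ref{ocjsinpar2b}). The auxiliary inequality $\delta/(\alpha+\delta)\geq\RelGap_p$ follows from an elementary computation: the $\ell^p$ denominator $(\what\lambda_i^p+\wtd\lambda_j^p)^{1/p}$ is non-increasing in $p$, so $\RelGap_p$ is maximized at $p=\infty$; under (\ref{condonSpectA}) the ratio $(\wtd\lambda_j-\what\lambda_i)/\max(\what\lambda_i,\wtd\lambda_j)$ is monotone in each argument and attains the value $\delta/(\alpha+\delta)$ at the extremal configuration $\what\lambda_i=\alpha$, $\wtd\lambda_j=\alpha+\delta$, with the same value obtained under (\ref{condonSpectB}) by symmetry.

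The step I expect to require the most care is the application of \cite[Lemma 2.3]{Li1999}. That lemma is stated for an equation $AZ-ZB=CB$ with positive definite coefficients whose spectra are relatively separated in a $p$-mean sense; the two alternate hypotheses (\ref{condonSpectA})/(\ref{condonSpectB}) correspond precisely to the two possible orderings of the spectra of $\what\Lambda_2$ and $\wtd\Lambda_1$ depicted in Figure~\ref{fig1}, and in either case the gap $\RelGap_p$ in (\ref{relgap21b}) is the one supplied by the lemma. A secondary subtlety is that the factors $M^{1/2}\what X_2$ and $\wtd M^{1/2}\wtd X_1$ above are rectangular; the standard trick of completing them to full unitaries and appealing to the fact that taking a compression does not increase any unitary invariant norm makes the estimate $\|C\|\leq\Psi_M$ rigorous on any $\|\cdot\|$, not merely the spectral norm.
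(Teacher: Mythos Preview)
Your proposal is correct and follows essentially the same route as the paper: derive the Sylvester equation $\what\Lambda_2 Z - Z\wtd\Lambda_1 = C\wtd\Lambda_1$ with $Z=\what X_2^*M\wtd X_1$ and $C=\what X_2^*\delta M\,\wtd X_1$, factor $C=(M^{1/2}\what X_2)^*\,M^{-1/2}\delta M\,\wtd M^{-1/2}\,(\wtd M^{1/2}\wtd X_1)$ using the $M$- and $\wtd M$-orthonormality from (\ref{PertStep2}), and then invoke \cite[Lemma~2.3]{Li1999}. Your treatment is in fact tidier than the paper's (you get the sign right and you explicitly note the compression-by-isometries step for general unitarily invariant norms), and you additionally sketch the elementary monotonicity argument for the auxiliary inequality in (\ref{relgap21b}), which the paper leaves unproved.
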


\begin{proof}
For the perturbed quantities it holds that
$$ (H + \delta H) \wtd X_1 = \wtd M
\wtd X_1 \wtd \Lambda_1, $$ where $\wtd \Lambda_1 = \diag(\wtd \lambda_1, \ldots, \wtd \lambda_r)$,
and similarly for the unperturbed quantities. Now, by multiplying the
above equality by $\what X_2^*$ from the left, we get
 \begin{align*} 
\what X_2^* \wtd H  \wtd X_1 -  X_2^* \wtd M  \wtd X_1 \wtd \Lambda_1 =  0 \,.
\end{align*}
Using the fact $\wtd H \what X_2 = M \what X_2 \what \Lambda_2 $ (see (\ref{PertStep2}))  this gives
 \begin{align} \label{eq:SylvPertIb}
\what \Lambda_2 \what X_2^* M  \wtd X_1 -  \what X_2^* M  \wtd X_1 \wtd \Lambda_1 =
 - \what X_2^* \delta M  \wtd X_1 \wtd \Lambda_1  \,.
\end{align}

We will proceed by  rearranging the right-hand side of (\ref{eq:SylvPertIb}). For that purpose note
that one can rewrite the right-hand side of (\ref{eq:SylvPertIb}) as
 \begin{align} \label{eq:SylvPertI-2b}
\what X_2^* \delta M  \wtd X_1 =   \what X_2^*
M^{1/2} M^{-1/2} \delta M  {\wtd M}^{-1/2}{\wtd M}^{1/2} \wtd X_1 {\wtd \Lambda_1}\,.
\end{align}
Recall, that from (\ref{PertStep2}) it follows that
$ \what Q_2^* \equiv \what X_2^* M^{1/2}$ and $\wtd Q_1 \equiv   {\wtd M}^{1/2} \wtd X_1$ have unitary columns, which together with (\ref{eq:SylvPertI-2b}) gives
 \begin{align*} 
 \what X_2^* \delta M  \wtd X_1  =\what Q_2^* M^{-1/2} \delta M  {\wtd M}^{-1/2} \wtd Q_1 \,.
\end{align*}

Applying  \cite[Lemma 2.3]{Li1999} to obtain the bounds on the solution of a structured Sylvester
  equation (see also \cite{Li1999a}), on (\ref{eq:SylvPertII}) one gets:
\begin{align}
\| \what X_2^* \delta M  \wtd X_1 \| \leq \frac{1}{\RelGap_p} \cdot  \|M^{-1/2} \delta M  {\wtd M}^{-1/2} \|
\label{eq:Bound1b}
\end{align}
where $\|\cdot\|$ stands for any unitary invariant norm, and  $\RelGap_p$ is defined as in  (\ref{relgap21b}). Now from (\ref{eq:Bound1b}) directly follows  bound (\ref{ocjsinpar2b}).
\end{proof}

\subsection{The main result}

As we have mentioned in our road-map, form (\ref{neq:NormSinTheta}) follows that
the upper bound for  $$\|\sin{\Theta_M({\cal X}_{1}, \wtd {\cal X}_{1} )} \|$$ will be obtained as the sum of the bounds for
$\|\sin{\Theta_M({\cal X}_{1}, \what {\cal X}_{1} )} \|$ and
$\|\sin_M{\Theta(\what {\cal X}_{1}, \wtd {\cal X}_{1} )} \|$.
Thus we have the following theorem:
\begin{tm} \label{Final_SinTheta_bound}
Let $(H, M)$ be a Hermitian pair and
let $(\wtd H, \wtd M)$ be the perturbed pair.
Let $X = \begin{bmatrix}X_{1} & X_{2} \end{bmatrix}$ and
$\wtd X = \begin{bmatrix}\wtd X_{1} & \wtd X_{2} \end{bmatrix}$, be
non-singular matrices  which simultaneously diagonalize the pairs
$(H, M)$ and $(\wtd H,  \wtd M)$, as in (\ref{diagvec1}) and (\ref{diagvec1t}), respectively.
If
\begin{align*}
\eta_M:= \|M^{-1/2} \delta M  M^{-1/2}\|_2 < \frac{1}{2}\,,
\end{align*}
and if (\ref{condonSpectA}) or (\ref{condonSpectB}) hold, then
\begin{align}\label{Final-ocjsinpar}
\|\sin{\Theta_M({\cal X}_{1}, \wtd {\cal X}_{1} )} \| & \leq
\frac{1}{\RelGap} \cdot \Psi_H +
\frac{1}{\RelGap_p} \cdot  \frac{\sqrt{1-\eta_M}}{\sqrt{1 - 2\, \eta_M}}\cdot\Psi_M \,,
\end{align}
where $\|\sin{\Theta_M({\cal X}_{1}, \wtd {\cal X}_{1} )} \|$ --- the sine of the angle between the subspaces in $M$ scalar product ---
is defined by (\ref{neq:NormSinTheta_pomB}),
and $\RelGap$ and $\RelGap_p$ are defined by
(\ref{relgap21}) and (\ref{relgap21b}), respectively.
\end{tm}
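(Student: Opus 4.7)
The plan follows the two-step road-map already anticipated in the section. First I would invoke the triangle inequality (\ref{neq:NormSinTheta}),
\[
\|\sin\Theta_M(\mathcal{X}_1, \wtd{\mathcal{X}}_1)\| \leq \|\sin\Theta_M(\mathcal{X}_1, \what{\mathcal{X}}_1)\| + \|\sin\Theta_M(\what{\mathcal{X}}_1, \wtd{\mathcal{X}}_1)\|,
\]
which splits the target quantity into the $H$-perturbation piece (going from $(H,M)$ to $(\wtd H, M)$ via the intermediate subspace $\what{\mathcal{X}}_1$) and the $M$-perturbation piece (going from $(\wtd H, M)$ to $(\wtd H, \wtd M)$). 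Theorem \ref{gptmsubsp2} immediately handles the first summand with the bound $\Psi_H/\RelGap$, so all of the new work is concentrated in obtaining the claimed bound for the second summand.

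For the second summand I would use the representation (\ref{def:SinTheta-M2b}), namely $\sin\Theta_M(\what{\mathcal{X}}_1, \wtd{\mathcal{X}}_1) = \what X_2^* M \wtd X_1 Y_{11}^{-*}$, and factor out the $Y_{11}^{-*}$ by submultiplicativity of any unitary invariant norm against the spectral norm:
\[
\|\sin\Theta_M(\what{\mathcal{X}}_1, \wtd{\mathcal{X}}_1)\| \leq \|\what X_2^* M \wtd X_1\| \cdot \|Y_{11}^{-*}\|_2.
\]
Theorem \ref{gptmsubspstep2} bounds the first factor by $\Psi_M/\RelGap_p$, which reduces the task to producing a sharp bound on $\|Y_{11}^{-*}\|_2$ in terms of $\eta_M$.

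The technical heart of the argument, and the main obstacle, is controlling $\|Y_{11}^{-*}\|_2$ using information measured in the $M$-metric, even though the columns of $\wtd X_1$ are orthonormal in the $\wtd M$-metric. From (\ref{def:CholFact}) one reads off $Y_{11} Y_{11}^* = I - \wtd X_1^* \delta M \wtd X_1$, and the identity $\wtd X_1^* \wtd M \wtd X_1 = I_k$ says that the columns of $\wtd Q_1 = \wtd M^{1/2} \wtd X_1$ are orthonormal, so
\[
\|\wtd X_1^* \delta M \wtd X_1\|_2 \leq \|\wtd M^{-1/2} \delta M \wtd M^{-1/2}\|_2.
\]
The hypothesis $\eta_M < 1/2$ gives the operator inequalities $(1-\eta_M)M \leq \wtd M \leq (1+\eta_M)M$, equivalently $(1+\eta_M)^{-1} \wtd M \leq M \leq (1-\eta_M)^{-1} \wtd M$. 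Writing $\wtd M^{-1/2} \delta M \wtd M^{-1/2} = I - \wtd M^{-1/2} M \wtd M^{-1/2}$ and bracketing its spectrum from these inequalities gives the spectral norm bound $\eta_M/(1-\eta_M)$. Consequently the smallest eigenvalue of $Y_{11} Y_{11}^*$ is at least $(1-2\eta_M)/(1-\eta_M)$, which is strictly positive exactly because $\eta_M < 1/2$, and hence $\|Y_{11}^{-*}\|_2 \leq \sqrt{(1-\eta_M)/(1-2\eta_M)}$.

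Substituting back into the two-term decomposition produces precisely (\ref{Final-ocjsinpar}). I anticipate that the only delicate point will be the transition from $M$-based to $\wtd M$-based measurements in the last step; everything else is a direct application of the two preparatory theorems together with the triangle inequality for the $M$-dependent canonical angles.
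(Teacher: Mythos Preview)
Your proposal is correct and follows essentially the same route as the paper: the triangle inequality (\ref{neq:NormSinTheta}) splits the bound into the two pieces handled by Theorems \ref{gptmsubsp2} and \ref{gptmsubspstep2}, and the remaining work is the estimate $\|Y_{11}^{-1}\|_2\leq\sqrt{(1-\eta_M)/(1-2\eta_M)}$. The only cosmetic difference is that the paper reaches the intermediate bound $\|\wtd X_1^*\delta M\wtd X_1\|_2\leq\eta_M/(1-\eta_M)$ via the explicit representation $\wtd X=M^{-1/2}(I+W)^{-1/2}Q$ with $W=M^{-1/2}\delta M M^{-1/2}$, whereas you obtain it (equivalently) from the $\wtd M$-orthonormality of $\wtd X_1$ and the operator inequality $(1-\eta_M)M\leq\wtd M$; both computations yield the same spectrum for $\wtd M^{-1/2}\delta M\wtd M^{-1/2}$.
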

\begin{proof}
Using (\ref{neq:NormSinTheta}), (\ref{ocjsinpar2}), (\ref{def:SinTheta-M2b}) and
(\ref{ocjsinpar2b}) and the multiplicative properties of unitary invariant matrix norms one gets
\begin{align} \label{Final-ocjsinpar-midle-Step}
\|\sin{\Theta_M({\cal X}_{1}, \wtd {\cal X}_{1} )} \| & \leq
\frac{1}{\RelGap} \cdot \Psi_H  +
\frac{1}{\RelGap_p} \cdot  \Psi_M \cdot \|Y_{11}^{-1}\|_2 \,,
\end{align}
where $Y_{11}= \sqrt{I -  \wtd X_1^* \delta M  \wtd X_1}$ is defined as in (\ref{def:CholFact}).
It left us to compute the bound for $\|Y_{11}^{-1}\|_2$. Using the $\wtd M$-orthogonality of $\wtd X$ it
can be easily seen that  $\wtd X$ and $M^{-1/2} ( I + M^{-1/2} \delta M M^{-1/2})^{-1/2}$ are
unitarily similar, that is that exists unitary matrix $Q$ such that
\begin{align} \label{help1A}
\wtd X = M^{-1/2} ( I + M^{-1/2} \delta M M^{-1/2})^{-1/2} Q
\end{align}
Now we can proceed, note that
\begin{align} \label{help1B}
\|\left(I -  \wtd X_1^* \delta M  \wtd X_1\right)^{-1/2}\|_2 \leq
\frac{1}{\sqrt{1 -  \|\wtd X_1^* \delta M  \wtd X_1\|_2}} \leq
 \frac{1}{\sqrt{1 -  \|\wtd X^* \delta M  \wtd X\|_2}} \,.
\end{align}
Set $W= M^{-1/2} \delta M M^{-1/2}$, then from (\ref{help1A}) follows
\begin{align}\label{helpNormXt}
\|\wtd X^* \delta M  \wtd X\|_2 = \| ( I + W)^{-1/2} W  ( I + W)^{-1/2}\|_2
\leq \frac{\eta_M}{1-\eta_M}\,.
\end{align}
Finally inserting (\ref{helpNormXt}) in (\ref{help1B}) one gets
\begin{align} \label{help1C}
\|\big(I -  \wtd X_1^* \delta M  \wtd X_1\big)^{-1/2}\| \leq
\frac{\sqrt{1 - \eta_M}}{\sqrt{1 -  2\, \eta_M }}  \,.
\end{align}
Now, insert (\ref{help1C}) in (\ref{Final-ocjsinpar-midle-Step}) to get (\ref{Final-ocjsinpar}), which completes the proof.
\end{proof}
An alternative version---that is to say a version where alternative relative perturbation
sizes feature---can be obtained using Lemma \ref{lem1}.
\begin{ko}\label{kor_main}
Under the assumptions of Theorem \ref{Final_SinTheta_bound} we have the estimate
\begin{align}\label{Final-ocjsinpar2b}
\|\sin{\Theta_M({\cal X}_{1}, \wtd {\cal X}_{1} )} \| & \leq
\frac{1}{\RelGap} \cdot \frac{\Phi_H}{\sqrt{1-\eta_H}} +
\frac{1}{\RelGap_p} \cdot  \frac{\Phi_M}{\sqrt{1 - 2\, \eta_M}} \,,
\end{align}
where
$\Phi_M=\|M^{-1/2}\delta MM^{-1/2}\|$ and $\Phi_H=\|H^{-1/2}\delta H H^{-1/2}\|$.\end{ko}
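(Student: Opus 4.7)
The plan is to derive the corollary as a direct consequence of Theorem \ref{Final_SinTheta_bound} by rewriting the two perturbation measures $\Psi_H$ and $\Psi_M$ appearing in (\ref{Final-ocjsinpar}) in terms of $\Phi_H$ and $\Phi_M$, using the norm comparison provided by Lemma \ref{lem1}. No new Sylvester-type machinery is required; the only work is a one-step substitution, combined with a cancellation that simplifies the coefficient of $\Phi_M$.

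First, I would invoke inequality (\ref{zadnja}) of Lemma \ref{lem1} twice---once for $H$ and once for $M$---to obtain
\begin{align*}
\Psi_H \;\leq\; \frac{1}{\sqrt{1-\eta_H}}\,\Phi_H, \qquad
\Psi_M \;\leq\; \frac{1}{\sqrt{1-\eta_M}}\,\Phi_M.
\end{align*}
Both inequalities are valid under the standing assumption $\eta_M<1/2<1$ (and the implicit assumption $\eta_H<1$ that is needed for $\Psi_H$ to even be finite), so there is no issue of the square roots becoming undefined.

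Next, I substitute these estimates directly into the conclusion (\ref{Final-ocjsinpar}) of Theorem \ref{Final_SinTheta_bound}. The first summand becomes $\Phi_H/(\RelGap\sqrt{1-\eta_H})$, which is already the desired first term. For the second summand I observe that the factor $\sqrt{1-\eta_M}$ in the numerator of (\ref{Final-ocjsinpar}) and the factor $\sqrt{1-\eta_M}$ in the denominator coming from the bound $\Psi_M\leq \Phi_M/\sqrt{1-\eta_M}$ cancel, leaving exactly $\Phi_M/(\RelGap_p\sqrt{1-2\eta_M})$. Adding the two pieces yields the inequality (\ref{Final-ocjsinpar2b}).

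Since the argument is a one-line substitution plus a telescoping of $\sqrt{1-\eta_M}$ factors, there is no genuine obstacle; the only point worth flagging is that one must use precisely the mixed-norm inequality (\ref{zadnja}) of Lemma \ref{lem1} (which is stated for a general unitarily invariant norm) rather than the scalar spectral-norm specialization mentioned afterwards, so that the resulting bound holds in the same unitarily invariant norm in which Theorem \ref{Final_SinTheta_bound} is formulated. With this caveat noted, the derivation is complete.
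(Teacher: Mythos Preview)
Your proposal is correct and is exactly the argument the paper intends: the text preceding the corollary explicitly says the alternative version ``can be obtained using Lemma \ref{lem1}'', and your substitution of (\ref{zadnja}) for both $H$ and $M$ into (\ref{Final-ocjsinpar}), with the $\sqrt{1-\eta_M}$ cancellation, is precisely that derivation.
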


\subsubsection{Weakening the assumption on the spectral dichotomy}
Note that theorem \ref{gptmsubspstep2} requires the special structure on specters of
$\wtd \Lambda_1$ and  $\what \Lambda_2$. The reason for this lies in the more involved analysis
of the structures Sylvester equation (\ref{eq:S2}), see the comment in the introduction to \cite{Li1999a}.

This limitation can be overcome by the use of the Frobenius norm instead of spectral norm. Thus,
the next theorem contains the perturbation bound similar to the one from Theorem
\ref{gptmsubspstep2} given for $\left\|\what X^*_2 M \wtd X_1 \right\|_F$, without any additional
assumptions on spectral configuration of the pair $(H,M)$.

\begin{tm} \label{gptmsubspstep2_Fro}
Let $(\wtd H, M)$, $(\wtd H, \wtd M)$, $\what X = \begin{bmatrix}\what X_{1} & \what X_{2} \end{bmatrix}$ and
$\wtd X = \begin{bmatrix} {\wtd X}_{1} & {\wtd X}_{2} \end{bmatrix}$, be as in Theorem \ref{gptmsubspstep2}.
Then
\begin{align}\label{ocjsinpar2b_Fro}
\left\|\what X^*_2 M \wtd X_1 \right\|_F & \leq
\frac{\Psi_M^{\|\cdot\|_F}}{\RelGap_{\rm comp}} \,,
\end{align}
where we remember the definition $\Psi_M^{\|\cdot\|_F}=\|M^{-1/2} \delta M  {\wtd M}^{-1/2} \|_F$
from (\ref{Psi_Fro}) and we assume that
\begin{eqnarray} \label{relgap21b_Fro}
\RelGap_{\rm comp} :=
\min_{\stackrel{\what \lambda_i \in \what \Lambda_2 }{ \wtd \lambda_j \in \wtd\Lambda_1}}
\frac{|\what \lambda_i - \wtd \lambda_j|}{\phantom{a} \wtd \lambda_j }~
\end{eqnarray}
is strictly larger than zero.
\end{tm}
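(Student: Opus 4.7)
The plan is to follow the same path as the proof of Theorem \ref{gptmsubspstep2}, but to take advantage of the Frobenius norm to avoid Lemma 2.3 of \cite{Li1999}. The starting point is the structured Sylvester equation (\ref{eq:SylvPertIb}), which after the factorization in the proof of Theorem \ref{gptmsubspstep2} can be written as
\begin{equation*}
\what\Lambda_2\, Z - Z\, \wtd\Lambda_1 \;=\; - C\, \wtd\Lambda_1, \qquad Z:=\what X_2^* M \wtd X_1, \quad C := \what Q_2^* M^{-1/2} \delta M \, \wtd M^{-1/2} \wtd Q_1,
\end{equation*}
where, as shown there, the matrices $\what Q_2$ and $\wtd Q_1$ have orthonormal columns.

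Since $\what\Lambda_2=\diag(\what\lambda_{k+1},\dots,\what\lambda_n)$ and $\wtd\Lambda_1=\diag(\wtd\lambda_1,\dots,\wtd\lambda_k)$ are diagonal with positive entries, the equation decouples entrywise and is immediately solved by
\begin{equation*}
z_{ij} \;=\; \frac{-\wtd\lambda_j}{\what\lambda_i-\wtd\lambda_j}\, c_{ij},
\end{equation*}
which is well defined because $\RelGap_{\rm comp}>0$ by assumption. The key observation is that the scalar multiplier satisfies
\begin{equation*}
\frac{\wtd\lambda_j}{|\what\lambda_i-\wtd\lambda_j|}\;\leq\;\frac{1}{\RelGap_{\rm comp}}
\end{equation*}
for every admissible pair $(i,j)$, directly from the definition (\ref{relgap21b_Fro}).

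Squaring and summing over $i,j$ yields $\|Z\|_F \leq \RelGap_{\rm comp}^{-1}\,\|C\|_F$. To close the argument I would then use that multiplication from the left by $\what Q_2^*$ and from the right by $\wtd Q_1$, both having orthonormal columns, does not increase the Frobenius norm, so
\begin{equation*}
\|C\|_F \;\leq\; \|M^{-1/2}\delta M\,\wtd M^{-1/2}\|_F \;=\; \Psi_M^{\|\cdot\|_F},
\end{equation*}
giving (\ref{ocjsinpar2b_Fro}). I do not expect any real obstacle here: the whole point of switching to the Frobenius norm is that the solution of the structured Sylvester equation is diagonal in an appropriate tensor-product basis, so the entrywise bound is sharp without needing any separation of the spectra of $\what\Lambda_2$ and $\wtd\Lambda_1$. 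The only place one must be a bit careful is the verification that the right-hand side really carries the factor $\wtd\Lambda_1$ (rather than $\wtd\Lambda_1^{1/2}$ as in Theorem \ref{gptmsubsp2}), since this is what makes the relevant gap quantity $|\what\lambda_i-\wtd\lambda_j|/\wtd\lambda_j$ instead of $|\what\lambda_i-\wtd\lambda_j|/\sqrt{\what\lambda_i\wtd\lambda_j}$.
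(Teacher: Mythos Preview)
Your proposal is correct and follows essentially the same route as the paper: you derive the same structured Sylvester equation $\what\Lambda_2 Z - Z\wtd\Lambda_1 = -C\wtd\Lambda_1$, solve it entrywise using the diagonal structure, bound each entry by $\RelGap_{\rm comp}^{-1}|c_{ij}|$, sum in Frobenius norm, and finish by noting that $\what Q_2$ and $\wtd Q_1$ have orthonormal columns so $\|C\|_F\leq\Psi_M^{\|\cdot\|_F}$. The paper's argument is identical in substance, differing only in notation.
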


\begin{proof} The first part of the proof is similar to the proof of theorem \ref{gptmsubspstep2} up to
the equality (\ref{eq:SylvPertI-2b}). Thus we continue the proof from there, that is one can write:
\begin{align} \label{eq:SylvPertIb_Fro}
\what \Lambda_2 \what X_2^* M  \wtd X_1 -  \what X_2^* M  \wtd X_1 \wtd \Lambda_1 =
 - \what X_2^* \delta M  \wtd X_1 \wtd \Lambda_1  \,,
\end{align}
and
 \begin{align} \label{eq:SylvPertII_Fro}
 \what X_2^* \delta M  \wtd X_1  =\what Q_2^* M^{-1/2} \delta M  {\wtd M}^{-1/2} \wtd Q_1\,,
\end{align}
where $ \what Q_2^* \equiv \what X_2^* M^{1/2}$ and $\wtd Q_1 \equiv   {\wtd M}^{1/2} \wtd X_1$ have unitary columns.

By interpreting (\ref{eq:SylvPertIb_Fro}) and (\ref{eq:SylvPertII_Fro}) component-wise if follows
\begin{align*}
(\what \Lambda_2)_{ii} (\what X_2^* M  \wtd X_1)_{ij} -  (\what X_2^* M  \wtd X_1)_{ij} (\wtd \Lambda_1)_{jj} =
 - (\what Q_2^* M^{-1/2} \delta M  {\wtd M}^{-1/2} \wtd Q_1)_{ij} (\wtd \Lambda_1)_{jj}  \,,
\end{align*}
or
\begin{align} \label{eq:SylvComponentI}
 (\what X_2^* M  \wtd X_1)_{ij}  =
 -\frac{(\wtd \Lambda_1)_{jj}}{(\what \Lambda_2)_{ii} - (\wtd \Lambda_1)_{jj}} \left(
 (\what Q_2)_{(:,i)}^* M^{-1/2} \delta M  {\wtd M}^{-1/2} (\wtd Q_1)_{(:,j)}
 \right)   \,,
\end{align}
where $(Q)_{(:,j)}$ denotes $j$-th column of the matrix $Q$.

By computing the Frobenius norm from (\ref{eq:SylvComponentI}) we have
\begin{align}\label{eq:kern}
 \|\what X_2^* M  \wtd X_1\|_F^2  = \sum\limits_{i=k+1}^n \sum\limits_{i=k+1}^n
 \frac{1}{\left| \frac{(\what \Lambda_2)_{ii} - (\wtd \Lambda_1)_{jj}}{(\wtd \Lambda_1)_{jj}}\right|^2} \left(
 (\what Q_2)_{(:,i)}^* M^{-1/2} \delta M  {\wtd M}^{-1/2} (\wtd Q_1)_{(:,j)}
 \right)^2   \,,
\end{align}
which gives
\begin{align} \label{eq:Bound1b_Fro}
\| \what  X_2^* M  \wtd X_1 \|_F \leq \frac{1}{\RelGap_{\rm comp}} \cdot  \|\what Q^* M^{-1/2} \delta M  {\wtd M}^{-1/2} \wtd Q_1 \|_F\,.
\end{align}
Now from (\ref{eq:Bound1b_Fro}), noting that $\what Q$ and $\wtd Q$ are both unitary, we obtain (\ref{ocjsinpar2b_Fro}).
\end{proof}

We can now give a Frobenius norm version of Theorem \ref{Final_SinTheta_bound}.

\begin{tm} \label{Final_Frob _bound}
Let $(H, M)$ be a Hermitian pair and
let $(\wtd H, \wtd M)$ be the perturbed pair.
Let $X = \begin{bmatrix}X_{1} & X_{2} \end{bmatrix}$ and
$\wtd X = \begin{bmatrix}\wtd X_{1} & \wtd X_{2} \end{bmatrix}$, be
non-singular matrices  which simultaneously diagonalize the pairs
$(H, M)$ and $(\wtd H,  \wtd M)$, as in (\ref{diagvec1}) and (\ref{diagvec1t}), respectively.
If the spectra are separated so that
$\RelGap_{\rm comp}>0$ then
\begin{align}\label{Final-ocjsinpar2}
\|\sin{\Theta_M({\cal X}_{1}, \wtd {\cal X}_{1} )} \|_{F} & \leq
\frac{1}{\RelGap} \cdot \Psi_H^{\|\cdot\|_{F}} +
\frac{1}{\RelGap_{\rm comp}} \cdot \Psi_M^{\|\cdot\|_{F}} \,,
\end{align}
where $\|\sin{\Theta_M({\cal X}_{1}, \wtd {\cal X}_{1} )} \|_{F}$ --- the sine of the angle between the subspaces in $M$ scalar product ---
is defined by (\ref{neq:NormSinTheta_pomB}),
and $\RelGap$ and $\RelGap_{\rm com}$ are defined by
(\ref{relgap21}) and (\ref{relgap21b_Fro}), respectively.
\end{tm}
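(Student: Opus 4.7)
The plan is to mimic the proof of Theorem \ref{Final_SinTheta_bound} line by line, replacing the spectral norm with the Frobenius norm throughout and substituting Theorem \ref{gptmsubspstep2_Fro} for Theorem \ref{gptmsubspstep2} in the second intermediate step. Starting from the triangle inequality (\ref{neq:NormSinTheta}) specialised to the Frobenius norm, the problem splits as
\[
\|\sin{\Theta_M({\cal X}_{1}, \wtd {\cal X}_{1} )} \|_{F} \leq \| \sin{\Theta_M( {\cal X}_{1}, \what {\cal X}_{1} )} \|_{F} + \| \sin{\Theta_M(\what {\cal X}_{1}, \wtd {\cal X}_{1} )} \|_{F},
\]
and the two summands are estimated independently.

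The first summand is handled for free by Theorem \ref{gptmsubsp2}: its conclusion (\ref{ocjsinpar2}) is stated for any unitary invariant norm, so specialising to $\|\cdot\|_F$ yields immediately
\[
\| \sin{\Theta_M( {\cal X}_{1}, \what {\cal X}_{1} )} \|_{F} \leq \frac{\Psi_H^{\|\cdot\|_F}}{\RelGap},
\]
which is exactly the first term of (\ref{Final-ocjsinpar2}) and which requires only the hypothesis $\RelGap>0$.

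For the second summand I would start from the identity (\ref{def:SinTheta-M2b}) expressing $\sin{\Theta_M(\what{\cal X}_{1}, \wtd{\cal X}_{1})}$ as $\what X^*_2 M \wtd X_1 Y_{11}^{-*}$, and apply Theorem \ref{gptmsubspstep2_Fro} to bound $\|\what X_2^* M \wtd X_1\|_F$ by $\Psi_M^{\|\cdot\|_F}/\RelGap_{\rm comp}$. The sole hypothesis needed here, $\RelGap_{\rm comp}>0$, is precisely the one assumed in the statement; the spectral dichotomy (\ref{condonSpectA})--(\ref{condonSpectB}) of the spectral-norm case is no longer required because the componentwise identity (\ref{eq:kern}) in the proof of Theorem \ref{gptmsubspstep2_Fro} solves the Sylvester equation eigenvalue by eigenvalue.

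The one genuinely non-trivial point is the treatment of the residual $Y_{11}^{-*}$ factor. Following the template of Theorem \ref{Final_SinTheta_bound}, it is controlled by mixed-norm submultiplicativity $\|AB\|_F \leq \|A\|_F \|B\|_2$ together with the spectral-norm estimate (\ref{help1C}) for $\|Y_{11}^{-1}\|_2$. Combining these bounds with the triangle inequality delivers (\ref{Final-ocjsinpar2}). There is no genuinely new technical obstacle; the theorem is a formal composition of Theorems \ref{gptmsubsp2} and \ref{gptmsubspstep2_Fro} via the triangle inequality, with the main benefit being the removal of the spectral dichotomy hypothesis in exchange for passing from the spectral to the Frobenius norm.
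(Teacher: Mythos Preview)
Your approach is exactly the one the paper intends: the theorem is stated without proof immediately after Theorem~\ref{gptmsubspstep2_Fro}, and the evident argument is precisely the composition you describe --- triangle inequality (\ref{neq:NormSinTheta}), then Theorem~\ref{gptmsubsp2} in Frobenius norm for the first summand, then Theorem~\ref{gptmsubspstep2_Fro} for the second.

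There is, however, one point where your write-up glosses over a real discrepancy. You say that handling the $Y_{11}^{-*}$ factor via (\ref{help1C}) ``delivers (\ref{Final-ocjsinpar2})''. It does not: what it delivers is
\[
\|\sin{\Theta_M({\cal X}_{1}, \wtd {\cal X}_{1} )} \|_{F} \leq
\frac{\Psi_H^{\|\cdot\|_F}}{\RelGap}  +
\frac{1}{\RelGap_{\rm comp}} \cdot \frac{\sqrt{1-\eta_M}}{\sqrt{1-2\eta_M}} \cdot \Psi_M^{\|\cdot\|_F},
\]
together with the side hypothesis $\eta_M<1/2$, exactly as in (\ref{Final-ocjsinpar}). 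The correction factor $\sqrt{1-\eta_M}/\sqrt{1-2\eta_M}$ cannot be removed by this route, because the very definition (\ref{def:SinTheta-M2b}) of $\sin\Theta_M(\what{\cal X}_1,\wtd{\cal X}_1)$ carries $Y_{11}^{-*}$, and no Frobenius-specific trick makes it disappear. The clean bound (\ref{Final-ocjsinpar2}) as printed therefore appears to be a slip in the paper's statement (missing both the factor and the $\eta_M<1/2$ assumption) rather than something your argument fails to reach. Your outline is otherwise correct.
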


\section{Numerical examples}\label{num}

It is not easy to numerically compare the eigenvector estimates for the perurbations of matrix pencils. The reason is that there is no canonical norm for the analysis of the eigenvector problem. For instance, assume that we have a positive definite symmetric pencil $(H,M)$, then any of the matrix
dependent norms (and the associated scalar products)
$$
\|\!|x\|\!|_{\alpha,\beta}=\sqrt{\alpha~ x^*Hx+\beta~ x^*Mx},\qquad \alpha\geq0, \beta\geq0, \text{ and } \alpha\beta\ne0\,
$$
is a meaningful candidate as well as is the standard Euclidean norm $\|\cdot\|$.

Applying any of the competing estimates in a situation for which they were not designed, is only possible after a nontrivial
intervention which often severely affects the sharpness of the result. To
this end we use the same set of problems for various approaches and compare them by comparing how well they
are doing a job they were designed for. More to the point, the estimate of the type $\textrm{Left}(\kappa)\leq\textrm{Right}(\kappa)$ --- where $\kappa\in\R$ is
a parameter --- is considered asymptotically sharp if
\begin{equation}\label{eq:exact_def}
\lim_{\kappa\to\infty}\frac{\textrm{Left}(\kappa)}{\textrm{Right}(\kappa)}=1.
\end{equation} Such property of
an estimator is sometimes called the asymptotic exactness of an estimator, see \cite{Grubivsi'c2009a}
and relation (\ref{eq:exact_def}) below.

\begin{rem}If we were to adopt the philosophy of \cite{Hetmaniuk2006a},
we would consider the family of eigenvalue problems $(H,\alpha H +\beta M)$ --- assuming $\alpha, \beta\in\R$ are such that
$\alpha H +\beta M$ is hermitian positive definite ---  and ask for such $\alpha$ and $\beta$ which are in some sense optimal. We cannot
 give an answer to the question of the choice the optimal energy norm now, but we might return to the question in future work.
Instead, we note that given $\alpha,\beta\in\R$
such that $\alpha H +\beta M$ and $H$ are Hermitian positive definite reduces the problem to the one we can handle. The eigenvalues $\lambda_i$ of the
matrix pair $(H,M)$ and $\lambda^{\alpha,\beta}_i$ of the pair $(H,\alpha H+\beta M)$ are related by the transformation $\mu_i=\lambda_i/(\alpha\lambda_i+\beta_i)$.
\end{rem}

\subsection{Perturbations of eigenspaces in the energy norm}\label{Acc1}

We will now use the theory form the preceeding section to study the rotation of eigenvectors of a parameter dependent family of eigenvalue problems
$$
H_\kappa=H_b+\kappa H_e,\qquad \kappa\gg1.
$$
Here $H_b$ is positive definite, $H_e$ is a positive semi-definite matrix and we are interested
in the estimate of the rotation of eigenvectors in the changing energy norm
$$
\|\!|x\|\!|_{H_\kappa}=\sqrt{x^*H_\kappa x}.
$$
For some further motivation for studying these problems see the Appendix

To this end we note that eigenvector problems
\begin{align}
H_\kappa v=\lambda v,&\;\;v=\frac{1}{\lambda}H_\kappa v,\;\;v=\lambda H_\kappa^{-1}v\\
H_\kappa^{-1}v=\frac{1}{\lambda}v,&\;\;H_\kappa^{-1}v=\frac{1}{\lambda^2}H_\kappa v
\end{align}
have the same eigenvectors. Furthermore, it is known, \cite{Grubivsi'c2009,Warburton2006} that as $\kappa$ tends to infinity the eigenvalues of $H_\kappa$ either tend to infinity or, they converge to the nonzero eigenvalues
of $$L_b:=P_{\Ker(H_e)}H_\kappa\Big|_{\Ker(H_e)}.$$

Subsequently, we decompose the space $\R^n=\Ker(H_e)\oplus(\Ker(H_e))^\perp$ and, without
reducing the level of generality---see \cite[Formula (12)]{Warburton2006}---think of $H_\kappa$ as the block operator matrix
\begin{equation}\label{eq:penal}
H_\kappa=\begin{bmatrix}
L_b& R_b^*\\R_b&W_b
\end{bmatrix}+\kappa\begin{bmatrix}0&0\\0&H_e\end{bmatrix}.
\end{equation}
We also denote the block diagonal of $H_\kappa$ with
$$
D_\kappa=\begin{bmatrix}L_b&\\& W_b +\kappa H_e\end{bmatrix}
$$
and compute
\begin{align*}
\|D_\kappa^{-1/2}(D_\kappa-H_\kappa)D_\kappa^{-1/2}\|&=\Big\|\begin{bmatrix}
0&L_b^{-1/2}R_b^*(W_b+\kappa E_B)^{-1/2}\\
(W_b+\kappa E_B)^{-1/2}R_bL_b^{-1/2}&0
\end{bmatrix}\Big\|\\&=\frac{1}{\sqrt{\kappa}}\Big\|\begin{bmatrix}
0&L_b^{-1/2}R_b^*(\frac{1}{\kappa}W_b+E_B)^{-1/2}\\
(\frac{1}{\kappa}W_b+E_B)^{-1/2}R_bL_b^{-1/2}&0
\end{bmatrix}\Big\|\\
&=O\big(\frac{1}{\sqrt{\kappa}}\big).
\end{align*}
Let us introduce the perturbation estimate $\eta_{H_\kappa}:=\|D_\kappa^{-1/2}(D_\kappa-H_\kappa)D_\kappa^{-1/2}\|$. With this we note the following
inequalities
\begin{align}
|x^*H_\kappa x-x^*D_\kappa x|&\leq\eta_{H_\kappa} x^* D_\kappa x\\
|x^*H_\kappa^{-1} x- x^*D_\kappa^{-1} x|&\leq\frac{\eta_{H_\kappa}}{1-\eta_{H_\kappa}}~x^*D_\kappa^{-1} x.\label{cf_eq}
\end{align}
Obviously, with this analysis we can chose
\begin{equation}\label{eqInv}
\eta_{H_\kappa^{-1}}=\frac{\eta_{H_\kappa}}{1-\eta_{H_\kappa}}
\end{equation}
and so we can apply Theorem \ref{Final_SinTheta_bound} directly.
\begin{rem}
This discussion indicates that it is easy, within this theory, to switch the roles of
$H$ and its inverse $H^{-1}$. This is so because an estimate on the perturbation of the one implies the relative estimate for the
perturbation of the other. A similar feature is shared by the relative gap from (\ref{Final-ocjsinpar2}) since
$$
\frac{|\frac{1}{\lambda}-\frac{1}{\mu}|}{\sqrt{\frac{1}\lambda\frac{1}{\mu}}}=\frac{|\lambda-\mu|}{\sqrt{\lambda\mu}}.
$$
It is pleasing and useful --- when switching the roles of $H$ and $M$ --- that both ingredients of an estimate like
(\ref{Final-ocjsinpar2})  are robust with respect to inversion of the eigenvalues.
\end{rem}

For first simple experiments we consider the family of problems
\begin{equation}\label{eq:prim}
H_\kappa=\begin{bmatrix}2&-1&0\\-1&2&-1\\0&-1&2+\kappa\end{bmatrix},\quad
\mathbb{H}_\kappa=\begin{bmatrix}2&-1&0&0\\-1&2&-1&0\\0&-1&2&-1\\0&0&-1&2+\kappa\end{bmatrix}\qquad\kappa\gg1.
\end{equation}
In the first experiment we will see how do the ingredients of the estimates---relative gap and the
residual---feature in their performance.

By $\lambda_1^{H_\kappa}<\lambda_2^{H_\kappa}<\lambda_3^{H_\kappa}$
we denote the eigenvalues of $H_\kappa$ and by
$\lambda_1^{\mathbb{H}_\kappa}<\lambda_2^{\mathbb{H}_\kappa}<\lambda_3^{\mathbb{H}_\kappa}<\lambda_4^{\mathbb{H}_\kappa}$
the eigenvalues of $\mathbb{H}_\kappa$. We also use for eigenvectors the following notation
\begin{align*}
H_\kappa v_i^{H_\kappa}&=\lambda_i^{H_\kappa} v_i^{H_\kappa},\qquad i=1, 2, 3,\\
\mathbb{H}_\kappa v_i^{\mathbb{H}_\kappa}&=\lambda_i^{\mathbb{H}_\kappa} v_i^{\mathbb{H}_\kappa},\qquad i=1, 2, 3,4~.
\end{align*}

The behavior of the family of problems (\ref{eq:prim}) has been analyzed in \cite{Warburton2006} with the
help of the Gerschgorin theorem. Let us consider the eigenspace which  belongs to the eigenvalues
$\lambda_1^{H_\kappa}<\lambda_2^{H_\kappa}$ and $\lambda_1^{\mathbb{H}_\kappa}<\lambda_2^{\mathbb{H}_\kappa}$.
to this end we write the implicit partial diagonalization of $H_\kappa$ and $\mathbb{H}_\kappa$ in the
generic block matrix form
$$
\begin{bmatrix}
L_{b}&R_b^*\\
R_b&W_b+\kappa H_e
\end{bmatrix}\begin{bmatrix}V_\kappa\\ \hat W_\kappa\end{bmatrix}=\begin{bmatrix}V_\kappa\\\hat W_\kappa\end{bmatrix}
\Lambda_\kappa
$$
where $L_b$, $W_b$, $R_b$ and $H_e$ are as in (\ref{eq:penal}) and $\Lambda_\kappa$ is
the diagonal matrix containing the targeted eigenvalues. The orthogonality property
$V_\kappa^*V_\kappa+\hat W_\kappa^*\hat W_\kappa=I$ together with the Gerschgorin theorem implies,
see \cite[pg. 3209]{Warburton2006}, the estimates
\begin{equation}\label{eq:Warburton}
\|L_bV_\kappa-V_\kappa\Lambda_\kappa\|=O\big(\frac{1}{\kappa}\big), \;\;\|V_\kappa^*V_\kappa-I\|=O\big(\frac{1}{\kappa^2}\big),\quad \|\hat W_\kappa\|=O\big(\frac{1}{\kappa}\big).
\end{equation}
In the example that follows we show this explicitly on the model problem and indicate a possible
dependence on $\kappa$ of the otherwise unaccessible matrix $V_\kappa$.
\begin{ex}\label{ex1}In this example we show that the estimates are \textit{asymptotically sharp}
---for the definition of this notion see (\ref{eq:exact_def}) below and reference \cite{Grubivsi'c2009a}
for a discussion of its significance in finite element computations---for the matrix $H_\kappa$.
 For this problem we have for eigenvalues and eigenvectors
 \begin{align*}
\lambda_1^\kappa&=1-\frac{1}{2 \kappa }+\frac{3}{8 \kappa ^2}-\frac{55}{128 \kappa ^4}+\frac{1}{2 \kappa ^5}+O\left(\frac{1}{\kappa^6}\right)\\
\lambda_2^\kappa&=3-\frac{1}{2 \kappa }-\frac{3}{8 \kappa ^2}+\frac{55}{128 \kappa ^4}+\frac{1}{2 \kappa
   ^5}+O\left(\frac{1}{\kappa^6}\right)\\
\lambda_3^\kappa&=\kappa +2+\frac{1}{\kappa }-\frac{1}{\kappa^5}+O\left(\frac{1}{\kappa^6}\right)
\end{align*}
\begin{align*}
v_1^\kappa&=\begin{bmatrix}
1 +\frac{1}{2\kappa}+\frac{5}{8 \kappa^2 }-\frac{1}{2 \kappa^3}+\frac{7}{128 \kappa^4}+\frac{1}{2 \kappa^5}-\frac{675}{1024
\kappa^6}+O\left(\frac{1}{\kappa^7 }\right)\\
1 +\frac{1}{\kappa}+\frac{1}{2 \kappa^2 }-\frac{3}{8 \kappa^3}+\frac{55}{128 \kappa^5}-\frac{1}
{2 \kappa^6}+O\left(\frac{1}{\kappa^7 }\right)\\\frac{1}{\kappa}\end{bmatrix},\\
v_2^\kappa&=\begin{bmatrix}
-1 +\frac{1}{2\kappa}-\frac{5}{8 \kappa^2 }-\frac{1}{2\kappa^3}-\frac{7}{128 \kappa^4}+\frac{1}{2 \kappa^5}+\frac{675}{1024
   \kappa^6}+O\left(\frac{1}{\kappa^7 }\right)\\
   1 -\frac{1}{\kappa}+\frac{1}{2\kappa^2}+\frac{3}{8 \kappa^3}-\frac{55}{128 \kappa^5}-
   \frac{1}{2\kappa^6}+O\left(\frac{1}{\kappa^7 }\right)\\\frac{1}{\kappa}
   \end{bmatrix},\\
v_3^\kappa&= \begin{bmatrix}
\left(\frac{1}{\kappa }\right)^2-\left(\frac{1}{\kappa }\right)^4+O\left(\frac{1}{\kappa^6 }\right)\\-\frac{1}{\kappa }+\left(\frac{1}{\kappa }\right)^5+O\left(\frac{1}{\kappa^6 }\right)\\1
\end{bmatrix}
\end{align*}
and $\eta_{H_\kappa}=\sqrt{\frac{2}{6+3\kappa}}$. Note that the
matrix $\begin{bmatrix}V_\kappa&\hat W_\kappa\end{bmatrix}^*$ has
columns given by $v_1^\kappa$ and $v_2^\kappa$, and so we can see the dependence $V_\kappa$
on the penalty parameter in this example explicitly.
\begin{figure}[ht]
\includegraphics[height=8cm,width=12cm]{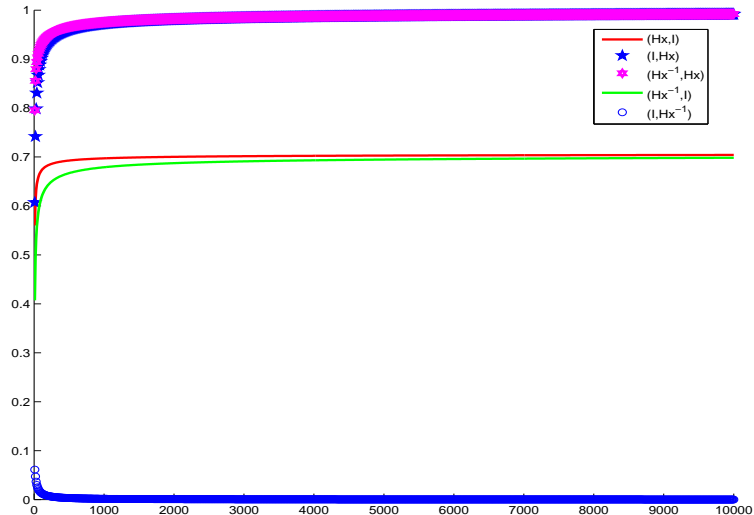}
\caption{Numerical experiment for Example \ref{ex1}. The experiment
demonstrates the notion of the \textit{asymptotic sharpness}. In this plot we have depicted the
effectivity quotient against the penalty parameter, see Example \ref{ex1} for the definition.}\label{Fig2}
\end{figure}
Using (\ref{eqInv}) we obtain
$$
\textrm{Right}_\kappa:=\frac{1}{\RelGap} \cdot \frac{\eta_{H_\kappa^{-1}}}{\sqrt{1 - \eta_{H_\kappa^{-1}}}}  +
\frac{1}{\RelGap_p} \cdot  \frac{\eta_{H_\kappa}}{\sqrt{1 - 2\, \eta_{H_\kappa}}}=O(\frac{1}{\sqrt{\kappa}}).
$$
On the other hand, a simple computation and Theorem \ref{Final_SinTheta_bound} yield, cf. (\ref{eq:Warburton}) that
$$
\textrm{Left}_\kappa:=\sin\Theta_{H_\kappa}(\Ran[v_1^\kappa \;v_2^\kappa],~\Ran[v_1^\infty \;v_2^\infty])=O(\frac{1}{\sqrt{\kappa}}).
$$
Here we have used the symbol $v_i^\infty$, $i=1,2,3$ to denote the limit eigenvectors of
$v_i^\kappa$, $i=1,2,3$ as $\kappa\to\infty$. They are also
the eigenvectors of the limit matrix
$$
H_\infty=\begin{bmatrix}2&-1&0\\-1&2&0\\0&0&0\end{bmatrix}.
$$
This shows that the energy norm estimate is sharp when viewed as the function of $\kappa$. On the other
hand a simple computation reveals that any of the $\sin\Theta$ theorems from \cite{Davis1970,Grubivsi'c2007,Li1999} yields a similar $O(\frac{1}{\sqrt{\kappa}})$---or even worse\footnote{The
residual estimate (\ref{eq:Warburton}) gets spoilt when we chose the orthonormal basis
for $\Ran[v_1^\infty \;v_2^\infty]$ as the columns of $V_\kappa$ are not orthonormal.}
for the $O(1)$---upper estimate for the
$$
\sin\Theta(\Ran[v_1^\kappa \;v_2^\kappa],~\Ran[v_1^\infty \;v_2^\infty])=O(\frac{1}{\kappa}).
$$

We now turn our attention to the study of the asymptotic sharpness --- in the sense of (\ref{eq:exact_def}) --- of our estimates on concrete examples This can be proved by direct computation for the case of our estimate applied to the
matrix pairs $(H_\kappa^{-1}, H_\kappa)$ and $(I,H_\kappa)$, cf. Example \ref{ex42} for further discussion.
This shows that a notion of sharpness---a $\sin\Theta$ theorem is considered to be sharp if there is
a perturbation in the allowed class of perturbations such that the bound is attained---for the estimates
of the rotation of eigenvectors is a delicate question. Let us note that we will call
$\frac{\textrm{Left}_\kappa}{\textrm{Right}_\kappa}$
the \textit{effectivety quotient}.
\end{ex}
\begin{ex}\label{ex42}
In this example we perform a Matlab experiment in which we evaluate
the estimate of Corollary \ref{kor_main} for the matrix pairs
$$
(\mathbb{H}_\kappa,I),\;(I,\mathbb{H}_\kappa),\;(\mathbb{H}_\kappa^{-1},\mathbb{H}_\kappa),\;(\mathbb{H}_\kappa^{-1},I),\;(I,\mathbb{H}_\kappa^{-1}).
$$
The results are presented on Figure \ref{Fig2b}. The results further illustrate the delicacy
of the issue of the sharpness of $\sin\Theta$ theorems.
\begin{figure}[ht]
\includegraphics[height=8cm,width=12cm]{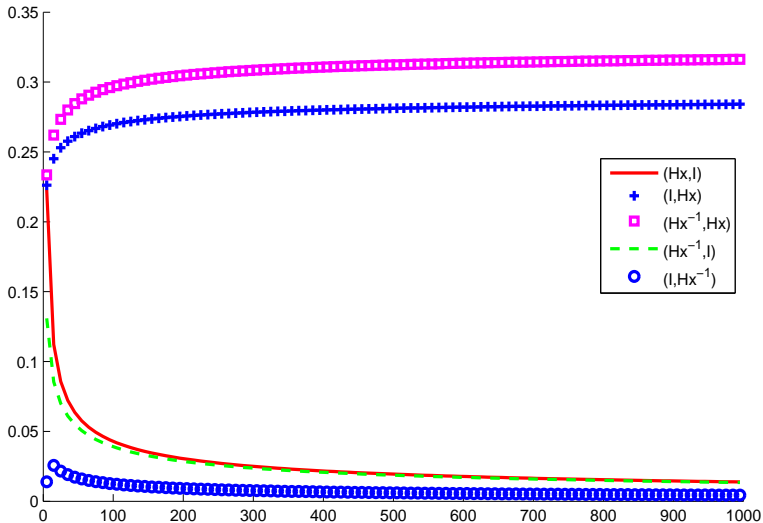}
\caption{Numerical experiment for Example \ref{ex42}. In this plot we have depicted the
effectivity quotient against the penalty parameter, see Example \ref{ex1} for the definition.}\label{Fig2b}
\end{figure}
Namely, the estimates are not asymptotically sharp for any of the considered matrix pairs, but the energy norm
estimates---that is estimates for the pairs $(I,\mathbb{H}_\kappa)$ and $(\mathbb{H}_\kappa^{-1},\mathbb{H}_\kappa)$---
are of the same order of the magnitude as the error---this can be seen from the fact that the
effectivity quotients converge to a constant---where es in the case of the estimates
for the other norms the effecivity quotients converge to zero. These convergence claims can be verified by a
direct symbolic computation.
This example shows that both the choice of a measure of the spectral gap as well as the choice of the measure of the residual play
a role in obtaining high performance estimators, since it was the influence of the measure of the relative
gap which guaranteed the asymptotic sharpness in Example \ref{ex1}, compare Figures \ref{Fig2} and \ref{Fig2b}.
\end{ex}

\subsection{A Matrix Market example}\label{Acc2}
For a further illustration of an effect similar to mass lumping we will consider the generalized eigenvalue problem
\begin{eqnarray*} 
H x & = & \lambda M x ,
\end{eqnarray*}
where the matrix $H$ is taken from the Matrix Market basis, see \cite{MatrixMar}. We choose
$H$ from the set CYLSHELL: Finite element analysis of cylindrical shells matrices. From this  test set
we took the matrix \verb+s1rmq4m1.mtx+ which is  real symmetric positive definite, $5489 \times 5489$
matrix with $143300$ entries. This matrix is obtained by finite element discretization of an octant of a cylindrical shell. The ends of the cylinder are free.


For the matrix $M$ we took diagonal matrix with---in Matlab notation---$\texttt{disg}(1:n)$ and we consider random perturbations $\delta H$ and $\delta M$, which satisfy
\begin{eqnarray*} 
|(\delta H)_{ij}| \leq \eta_H  |H_{ij}|, \qquad |(\delta M)_{ij}| \leq \eta_M  |M_{ij}|,
\end{eqnarray*}
where $\eta_H = \eta_M = 10^{-8}$. The above assumption means that zeros remain unperturbed and
we have chosen the $M$ matrix whose norm explodes as $n\to\infty$. This is a reasonable choice
for our method, since the technique of our proof can readily be adapted to yield the same result
for some unbounded pair of operators in a Hilbert space.

As a comparison we consider one of the well known the standard perturbation bound for matrix pairs is given by the theorem of Stewart and Sun from \cite[Chapter VI]{Stewart1990}. To this end, let $(H, M)$ be a symmetric definite pair, such that (\ref{diagvec1}) holds. That is,
let $X = \lmat{cc} X_1 & X_2 \rmat$ be such that
\begin{eqnarray} \label{simdiag1}
\begin{bmatrix} X_1^* \\ X_2^* \end{bmatrix} H \begin{bmatrix} X_1 & X_2 \end{bmatrix} = \begin{bmatrix}\Lambda_1 & \\  &
\Lambda_2 \end{bmatrix} \quad
\begin{bmatrix}X_1^* \\ X_2^* \end{bmatrix} M \begin{bmatrix}X_1 & X_2 \end{bmatrix} = \begin{bmatrix} I_k & \\
 & I_{n-k} \end{bmatrix},
 \end{eqnarray}
where
\begin{eqnarray*}
\Lambda_1 = \diag(\lambda_1, \ldots, \lambda_k), & & \Lambda_2 =
\diag(\lambda_{k+1}, \ldots, \lambda_n),
 \end{eqnarray*}
and $X_1 \in \C^{n \times k}$, $X_2 \in \C^{n \times
{n-k}}$.
 The following
theorem contains a bound for the Frobenius norm of the diagonal
matrix which contains the sines of the canonical angles between
eigenspace $\Ran(X_1)$ and corresponding perturbed eigenspace ${\cal
R}(\tilde X_1)$.

\begin{tm}[Sun]\label{korssn}
Let the definite pair $(H, M)$ be decomposed as in
(\ref{simdiag1}) where $X_1$ and $X_2$ have orthonormal columns.
Let the analogous decomposition be given for the pair $(\tilde H,
\tilde M) \equiv ( H+\delta H, M + \delta M)$. If
$$
\delta = \min \left\{ \frac{|\tilde \lambda - \lambda|}{\sqrt{1 + \tilde
\lambda^2} \sqrt{1 + \lambda^2}}\,;  \lambda \in \varrho(\Lambda_1),
\tilde \lambda \in \varrho(\tilde \Lambda_2)\right\},
$$ then
\begin{eqnarray}\label{stsunsin1}
\| \sin{\Theta[\Ran(X_1), \Ran(\tilde X_1)]} \|_F \leq
\frac{\sqrt{\| H^2 + M^2\|}}{\gamma(H,M) \gamma(\tilde H, \tilde
M)} \frac{\sqrt{\| \delta H X_1 \|_F^2 + \| \delta M X_1
\|_F^2}}{\delta}\,,
 \end{eqnarray}
where
\begin{equation}\label{defpair}
\gamma(H,M) = \min_{\stackrel{ x \in \C^n}{\| x \| =1 }}
| x^* (H + \imath M ) x| = \min_{\stackrel{ x \in \C^n}{\| x \| =1 }}
\sqrt{ (x^* H x)^2 + (x^* M x)^2} > 0.
\end{equation}
\end{tm}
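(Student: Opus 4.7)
The plan is to reduce the claim to a Sylvester-type perturbation argument in the spirit of the generalized Davis--Kahan theorem for definite matrix pairs, and to harvest the chordal metric in the denominator from an associated complexified pencil $H + \imath M$.

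First, I would derive a residual identity. From $H X_1 = M X_1 \Lambda_1$ together with $\tilde H \tilde X_1 = \tilde M \tilde X_1 \tilde \Lambda_1$ and the substitution $\tilde H = H + \delta H$, $\tilde M = M + \delta M$, left multiplication by $X_2^*$ produces
\begin{equation*}
X_2^* H \tilde X_1 - X_2^* M \tilde X_1 \tilde \Lambda_1 = -X_2^* \delta H \tilde X_1 + X_2^* \delta M \tilde X_1 \tilde \Lambda_1.
\end{equation*}
Combined with $X_2^* H = \Lambda_2 X_2^* M$ read off from (\ref{simdiag1}), this yields a Sylvester-like equation for $X_2^* M \tilde X_1$ and an analogous one for $X_2^* H \tilde X_1$. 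Eliminating one in favor of the other by means of the complexified pencil $H + \imath M$ turns the coupled system into a single Sylvester equation $\Lambda_2 Z - Z \tilde \Lambda_1 = R$ with right-hand side of the form $X_2^*(\delta H + \imath \delta M)\tilde X_1$, suitably weighted by the targeted spectra.

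Second, I would translate the $\sin\Theta$ norm into a bound on $\|X_2^* Q \tilde X_1\|_F$ for an appropriate matrix $Q$ built from $H$ and $M$. Since $X_1, \tilde X_1$ are assumed Euclidean orthonormal in the statement while (\ref{simdiag1}) prescribes $M$- and $\tilde M$-orthonormality, a change of basis is needed; the norm of this change of basis is controlled precisely by the Crawford numbers $\gamma(H, M)^{-1}$ and $\gamma(\tilde H, \tilde M)^{-1}$, using that $\gamma$ is the minimum of $|x^*(H + \imath M)x|$ over the Euclidean unit sphere. This is what places the product $\gamma(H, M)\gamma(\tilde H, \tilde M)$ in the denominator, while the compensating numerator $\sqrt{\|H^2 + M^2\|} = \|H + \imath M\|_2$ appears as the operator norm of the complexified pencil used to decouple the two Sylvester equations.

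Third, I would apply the standard entry-wise Frobenius-norm estimate for Sylvester equations, which produces weights $|\lambda_i - \tilde \lambda_j|^{-1}$; after the complexification these weights naturally acquire the factors $\sqrt{1 + \lambda_i^2}\sqrt{1 + \tilde \lambda_j^2}$, so that the worst entry contributes exactly $1/\delta$ in the notation of the theorem. Bounding the residual $R$ by $\sqrt{\|\delta H X_1\|_F^2 + \|\delta M X_1\|_F^2}$, using that $\|A + \imath B\|_F^2 = \|A\|_F^2 + \|B\|_F^2$ for real $A$ and $B$, then yields (\ref{stsunsin1}). The main obstacle is the careful bookkeeping of the two normalizations in play, namely the Euclidean orthonormality assumed in the statement and the $M$- and $\tilde M$-orthogonality implicit in (\ref{simdiag1}). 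Tracking the resulting factors cleanly is exactly what forces the Crawford numbers and the $\sqrt{\|H^2 + M^2\|}$ factor to appear in the stated combination, and is the delicate part of the argument.
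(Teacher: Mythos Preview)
The paper does not prove this theorem. It is quoted verbatim as a known result attributed to Sun, with the explicit citation ``the standard perturbation bound for matrix pairs is given by the theorem of Stewart and Sun from \cite[Chapter VI]{Stewart1990},'' and is included solely as a benchmark against which the authors' own bounds are numerically compared in Section~\ref{Acc2}. There is therefore no in-paper proof to compare your attempt against.

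That said, your sketch is in the right spirit for how this result is actually derived in the Stewart--Sun theory: the chordal metric $|\tilde\lambda-\lambda|/\sqrt{(1+\lambda^2)(1+\tilde\lambda^2)}$ and the Crawford number $\gamma(H,M)$ do arise from working with the pencil through the pair $(H,M)$ viewed on the unit sphere of $H+\imath M$, and a Sylvester identity of the type you wrote is the engine. One point to be careful about: you flag a tension between the Euclidean orthonormality assumed in the hypothesis and the $M$-orthonormality in (\ref{simdiag1}), but in the Stewart--Sun framework one does not pass between these normalizations via Crawford numbers in quite the way you describe. Rather, the factors $\gamma(H,M)^{-1}$ and $\sqrt{\|H^2+M^2\|}$ enter because the residual and the gap are measured after normalizing the pencil to have unit ``length'' $\|H+\imath M\|$, and the Crawford number controls how far the definite pair is from being singular. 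Your third step, harvesting the chordal weights from an entrywise Frobenius argument, is correct in outline; but the bookkeeping you identify as ``the delicate part'' is genuinely delicate, and your sketch does not yet show how the residual acquires the precise form $\sqrt{\|\delta H X_1\|_F^2+\|\delta M X_1\|_F^2}$ with $X_1$ rather than $\tilde X_1$ on the right. If you want a complete argument you should consult \cite[Chapter VI, Theorem 2.15 and surrounding material]{Stewart1990} directly.
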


We estimate the perturbation of invariant subspace which corresponds with first four smallest eigenvalues of the matrix pair $(H,M)$. The experiment is to be understood in the context of the testing
of the asymptotic sharpness of the estimator as in the definition (\ref{eq:exact_def}).
\begin{ex}[The performance of our estimate]
The exact perturbation gives:
\begin{align*}
\|\sin{\Theta_M({\cal X}_{1}, \wtd {\cal X}_{1} )} \|  \approx 6.727 \cdot 10^{-7},
\end{align*}
while our bound (\ref{Final-ocjsinpar}) gives
\begin{align*}
\|\sin{\Theta_M({\cal X}_{1}, \wtd {\cal X}_{1} )} \|  \leq 8.6721 \cdot 10^{-4}\,.
\end{align*}
\end{ex}
\begin{ex}[The performance of the Stewart-Sun bound]
The bound (\ref{stsunsin1}) here is not satisfactory due the fact that
$\gamma(H,M) = 1$, and $\gamma(H+\delta H, M + \delta M) \approx 1 + \varepsilon$. On the other hand
the gap $\delta \sim  10^{-6}$ and $\sqrt{\| H^2 + M^2\|}\sim 10^{5}$. Together with
\[
\sqrt{\| \delta H X_1 \|_F^2 + \| \delta M X_1 \|_F^2} = 3.872 \cdot 10^{-6}\,,
\]
we have
\begin{align*}
\| \sin{\Theta[\Ran(X_1), \Ran(\tilde X_1)]} \|_F   \leq  6 \cdot 10^{5}\,.
\end{align*}
\end{ex}

\section*{Acknowledgement}
Luka Grubi\v{s}i\'{c} was supported by the grant:
``Spectral decompositions -- numerical methods and applications'', Grant Nr. 037-0372783-2750 of the Croatian MZOS,
Ninoslav Truhar was supported by the grant:
``Passive control of mechanical models '', Grant Nr. 235-2352818-1042 of the Croatian MZOS.

\bibliographystyle{abbrv}
\def\cprime{$'$}

\appendix
\section{A motivation to study the problems of the large coupling limit}\label{App}
Consider positive definite eigenvector problems of the following type: find
$\psi$, $\|\psi\|=1$ and
$\lambda\in\R$ such that
\begin{equation}\label{eq:motiv}
H_\kappa\psi=H_b\psi+\kappa H_e\psi=\lambda\psi,
\end{equation}
where $H_b$ is positive definite matrix and $H_e$ is a semidefinite
perturbation which has a significant null space and $\kappa\gg 1$.
The presence of a large coupling constant \(\kappa\)
the singular perturbation $H_e$ causes the appearance of spurious, that is
nonphysical,
eigenvalues due to the non-zero component of $H_e$.
  It is our aim to obtain bounds on the rotation of eigenspaces
which is caused by this perturbation.

When considering the families of matrices/operators like $
H_\kappa=H_b+\kappa H_e,\qquad\kappa\gg1.
$ the parameter $\kappa$ is called the coupling ---
or depending on the context the penalty --- parameter.
The family of perturbations $\kappa H_e$
splits the spectrum of $H_\kappa$ into a bounded and an unbounded component as
$\kappa\to\infty$.

One typical example of a problem in this setting are the penalty methods for
Maxwell or Stokes' eigenvalue problems. For more information and further
references see \cite{Warburton2006}.
There, the authors analyze the dependence of the spectrum of the discretization
matrix of the
Maxwell's eigenvalue problem on the large coupling parameter and show---by a
very elegant Gerschgorin
type argument---that as $\kappa\to\infty$ the eigenvalues of interest converge
with the rate
proportional to $\kappa^{-1}$.

Let us note that the models where one considers the limits of the large penalty
are representative for a larger class of parameter dependent singularly
perturbed eigenvalue problems.
These problems typically appear in the study of optical
nano-devices, hard core scattering theory and in the analysis of
lower dimensional approximations to the 3D elasticity (like Arches and
Plates), see \cite{BenAmor2008,Brasche2005a,Demuth2000,Grubivsi'c2009}.
Another example is the so called ``lumped mass approximation'' in which
an auxiliary diagonal mass matrix \(\tilde M\) is constructed which generates
an equivalent scalar product. Such matrices are typically constructed by using quadrature
formulae and pseudo $L^2$ projections, see \cite{Banerjee1990}.
The analysis from \cite{BenAmor2008,Grubivsi'c2009}
shows that the eigenvalue estimates form \cite{Warburton2006} are sharp when
viewed in terms of
the dependence on the coupling constant, cf. Example \ref{ex1}.
\footnote{Explicit constants and their physical
interpretations are explicitly given in \cite{BenAmor2008,Grubivsi'c2009}.}

\end{document}